\documentclass[11pt]{amsart}
\usepackage[left=2.2cm,tmargin=2.5cm,bmargin=2.5cm,right=2.2cm]{geometry}
\usepackage{amsfonts}
\usepackage{amssymb}
\usepackage{hyperref}
\usepackage{stmaryrd}
\usepackage{mathrsfs}
\usepackage[all]{xy}
\usepackage[capitalise]{cleveref}
\usepackage{amscd}
\usepackage{physics}
\usepackage{enumitem}
\usepackage{graphicx}
\usepackage[utf8]{inputenc}
\usepackage{pdfpages}
\usepackage{listings}

\newtheorem{thm}{Theorem}[section]
\newtheorem{thm*}{Theorem}
\newtheorem{lem}[thm]{Lemma}
\newtheorem{lem*}[thm*]{Lemma}
\newtheorem{cor}[thm]{Corollary}
\newtheorem{cor*}[thm*]{Corollary}
\newtheorem{prop}[thm]{Proposition}
\newtheorem{prop*}[thm*]{Proposition}

\newtheorem{conj*}[thm*]{Conjecture}

\newtheorem{defn*}[thm*]{Definition}

\newtheorem{ex*}[thm*]{Example}
\newtheorem{rmk}[thm]{Remark}
\newtheorem{rmk*}[thm*]{Remark}

\newtheorem{clm*}[thm*]{Claim}
\newtheorem{question}[thm]{Question}

\numberwithin{equation}{section}

\begin{document}

\title{subgroups with all finite preimages isomorphic are conjugate}

\author[Karshon]{Ido Karshon}
\author[Lubotzky]{Alexander Lubotzky}
\author[McReynolds]{D. B. McReynolds}
\author[Reid]{Alan W. Reid}
\author[Shusterman]{Mark Shusterman} 

\address{Faculty of Mathematics and Computer Science, Weizmann
Institute of Science, 234 Herzl Street, Rehovot 76100, Israel.}
\email{ido.karshon@gmail.com}

\address{Faculty of Mathematics and Computer Science, Weizmann
Institute of Science, 234 Herzl Street, Rehovot 76100, Israel.}
\email{alex.lubotzky@mail.huji.ac.il}

\address{Department of Mathematics, Purdue University, West Lafayette, IN 47907}
\email{dmcreyno@purdue.edu}

\address{Department of Mathematics, Rice University, Houston, TX 77005. School of Mathematics, Korea Institute for Advanced Study (KIAS), Seoul, 02455, Korea.}
\email{alan.reid@rice.edu}

\address{Faculty of Mathematics and Computer Science, Weizmann
Institute of Science, 234 Herzl Street, Rehovot 76100, Israel.}
\email{mark.shusterman@weizmann.ac.il}

\begin{abstract}
We show that for non-conjugate subgroups $G_1$ and $G_2$ of a finite group $G$ there exists an extension of $G$ (by a finite group) in which the pre-images of $G_1$ and $G_2$ are not isomorphic. This allows us to show that $\mathbb Z$--coset equivalent subgroups of a finite group are not necessarily isomorphic, answering a question of Dipendra Prasad. We also indicate connections to profinite rigidity, anabelian geometry, mapping class groups, and non-arithmetic lattices in Lie groups.
\end{abstract}

\subjclass[2000]{20D99, 20D45}

\keywords{extensions of finite groups, coset equivalence, profinite completion}

\maketitle

\section{Introduction}

\subsection{Main Result}

Let $G$ be a group, and let $G_1$, $G_2$ be subgroups of $G$. If $G_1$ and $G_2$ are conjugate, then for every group $\widetilde G$ and a surjective group homomorphism $\varphi \colon \widetilde G \to G$ the pre-images $\widetilde G_1 = \varphi^{-1}(G_1)$ and $\widetilde G_2 = \varphi^{-1}(G_2)$ are conjugate in $\widetilde G$ (by any element of $\widetilde G$ that is mapped by $\varphi$ to an element that conjugates $G_1$ to $G_2$). In particular $\widetilde G_1$ and $\widetilde G_2$ are isomorphic. Our main result says that if $G_1$ and $G_2$ are non-conjugate, and $G$ is finite, we can make $\widetilde G_1$ and $\widetilde G_2$ not isomorphic and finite.

\begin{thm} \label{MainRes}
Let $G$ be a finite group. Then there exists a finite group $\widetilde G$ and a surjective group homomorphism $\varphi \colon \widetilde G \to G$ such that for any two non-conjugate subgroups $G_1, G_2 \le G$, the pre-images $\widetilde G_1 = \varphi^{-1}(G_1)$ and $\widetilde G_2 = \varphi^{-1}(G_2) $ are not isomorphic.
\end{thm}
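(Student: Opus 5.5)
The plan is to build $\widetilde G$ as a wreath-type extension whose kernel is a product of pairwise non-isomorphic nonabelian simple groups. Each conjugacy class of subgroups of $G$ gets its own "colour" of simple group, so that the isomorphism type of a pre-image records, intrinsically, how the corresponding subgroup acts on each coset space $G/K$.

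\textbf{Construction.} Let $K_1,\dots,K_r$ be representatives of the conjugacy classes of subgroups of $G$. Choose pairwise non-isomorphic nonabelian finite simple groups $S_1,\dots,S_r$, none of which is isomorphic to a composition factor of $G$; for instance, take alternating groups $A_{n_i}$ with distinct $n_i > |G|$. Put
\[
N=\prod_{i=1}^r S_i^{\,G/K_i},
\]
on which $G$ acts by permuting coordinates through its left action on each $G/K_i$. Set $\widetilde G=N\rtimes G$ and let $\varphi$ be the projection onto $G$. Then for $H\le G$ we have $\widetilde H=\varphi^{-1}(H)=N\rtimes H$.

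\textbf{Recovering invariants intrinsically.} In $\widetilde H$, call a subgroup an $S_i$-component if it is subnormal and isomorphic to $S_i$. I first claim that the $S_i$-components of $\widetilde H$ are exactly the coordinate factors $S_i^{(x)}$ for $x\in G/K_i$.
\begin{itemize}
\item Each coordinate factor is normal in $N$, and $N$ is normal in $\widetilde H$, so each factor is subnormal.
\item Conversely, let $C\cong S_i$ be subnormal in $\widetilde H$. If $C\not\le N$, then $C\cap N$ is a proper normal subgroup of the simple group $C$, hence trivial. Thus $C\cong CN/N$ is a subnormal simple section of $H$, hence a composition factor of $H$ and therefore of $G$. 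This contradicts the choice of $S_i$.
\item So $C\le N$. A subnormal simple nonabelian subgroup of a direct product of nonabelian simple groups is one of the factors; this is the standard fact about components.
\end{itemize}

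Consequently, any isomorphism $\widetilde G_1\to\widetilde G_2$ carries $S_i$-components to $S_i$-components. It also carries the $S_i$-components that are normal in the whole group to those of the same kind. Now $S_i^{(x)}$ is normal in $\widetilde H$ exactly when $H$ fixes $x$, because $N$ normalises every factor and $h\in H$ sends $S_i^{(x)}$ to $S_i^{(hx)}$. Hence
\[
f_i(H)=\#\{x\in G/K_i : Hx=x\}
\]
is an isomorphism invariant of $\widetilde H$. Also $|H|=|\widetilde H|/|N|$ is an invariant.

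\textbf{Conclusion.} Suppose $\widetilde G_1\cong\widetilde G_2$. Then $|G_1|=|G_2|$. Choose $i$ with $K_i$ conjugate to $G_1$. Then $G_1$ fixes the coset corresponding to itself, so $f_i(G_1)\ge 1$. Hence $f_i(G_2)\ge 1$, which means $G_2\le gK_ig^{-1}$ for some $g$, i.e. $G_2$ is contained in a conjugate of $G_1$. Equal orders then force $G_2$ to be conjugate to $G_1$.

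\textbf{Main obstacle.} The step needing the most care is the intrinsic identification of the coordinate factors, i.e. ruling out "diagonal" or non-coordinate subnormal copies of $S_i$. This is handled by two ingredients: the choice of the $S_i$ avoiding the composition factors of $G$, and the classical fact that the subnormal simple nonabelian subgroups of a direct product of nonabelian simple groups are precisely its factors. I would verify the latter by noting that a subnormal subgroup normalises, and hence either contains or centralises, each factor (Wielandt's lemma on components).
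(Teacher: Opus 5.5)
Your proof is correct, and it follows a genuinely different and more elementary route than the paper's. One justification needs repair. A composition factor of a subgroup $H\le G$ need not be a composition factor of $G$ (for example $A_5\le A_6$), so the hypothesis ``$S_i$ is not a composition factor of $G$'' does not by itself force $C\le N$. What the step actually uses is that $C\cap N=1$ would embed $S_i$ into $H\le G$. Your concrete choice $S_i=A_{n_i}$ with distinct $n_i>|G|$ rules this out by order, provided you also take $n_i\ge 5$. With that choice the argument goes through. Alternatively, the factor indexed by $K=1$ makes the action faithful, so $C_{\widetilde H}(N)=1$. Wielandt's lemma (a component either lies in or centralizes a normal subgroup) then gives $C\le N$ with no size condition.

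\textbf{How the paper argues.} The paper takes $\widetilde G=\operatorname{Aut}(N)$, where $N$ is Cornulier's supersolvable group with $\operatorname{Out}(N)\cong G$, $Z(N)=1$, and $\operatorname{Inn}(N)$ the only copy of $N$ in $\operatorname{Aut}(N)$. Given $\widetilde G_1\cong\widetilde G_2$, it forms an HNN extension and uses residual finiteness of virtually free groups to embed $\widetilde G$ in a finite $\widetilde S$ where $\widetilde G_1,\widetilde G_2$ are conjugate. Rigidity of $N$ then makes $N$ normal in $\widetilde S$, and $\operatorname{Out}(N)\cong G$ yields a retraction $S\to G$ that carries the conjugating element into $G$.

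\textbf{How your route differs.} You instead read off, intrinsically from $\widetilde H$, whether $H$ fixes a point of $G/K_i$: this is the number of normal subgroups isomorphic to $S_i$. You then use that $H$ fixes a point of $G/K$ iff $H$ is subconjugate to $K$, together with $|G_1|=|G_2|$. This avoids Cornulier's construction and residual finiteness entirely, and gives an explicit isomorphism invariant.

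\textbf{What each approach buys.} The paper's route gives a supersolvable kernel, so $\widetilde G$ is solvable whenever $G$ is; the paper stresses this in connection with anabelian questions. Your kernel, a product of nonabelian simple groups, can never give this. Also, your final step uses finiteness to upgrade subconjugacy to conjugacy, so it does not transfer to the infinite setting of \cref{thm:main:infinite}. There the paper's $\operatorname{Out}$-based rigidity argument still works.
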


Moreover, we can arrange for the kernel of $\varphi$ to be supersolvable, and in particular, if $G$ is solvable, then so is $\widetilde G$. Our proof of \cref{MainRes} builds on the following result posted originally on MathOverflow, see \cite{cornulier-original}.

\begin{thm}[Y. Cornulier]\label{thm:cornulier} 
If $G$ is a finite group, then there exists a finite group $N$ such that $G \cong \operatorname{Out}(N)$.
\end{thm}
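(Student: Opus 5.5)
The plan is to realise $G$ as the automorphism group of a combinatorial object and then encode that object rigidly in a finite group $N$, so that $\operatorname{Out}(N)$ is exactly the symmetry group of the object. By Frucht's theorem there is a finite graph $\Gamma=(V,E)$ with $\operatorname{Aut}(\Gamma)\cong G$. We may assume $\Gamma$ has no isolated vertices, by adjoining gadgets if needed.

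\textbf{Building blocks.} Fix a nonabelian finite simple group $H$ with $\operatorname{Out}(H)=1$, for instance $M_{11}$. Then $H$ is centerless and $\operatorname{Aut}(H)=\operatorname{Inn}(H)$. Also fix an absolutely irreducible faithful $\mathbb F_2H$-module $W$.

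For each edge $e=\{u,v\}$ set $M_e=W_u\otimes W_v$. This is an absolutely irreducible module for $H_u\times H_v$. The swap of the two tensor factors is compatible with swapping $u$ and $v$, so no orientation of $\Gamma$ is needed.

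\textbf{The group $N$.} Define
\[
N=\Bigl(\bigoplus_{e\in E} M_e\Bigr)\rtimes H^V ,
\]
where $H_w$ acts trivially on $M_e$ unless $w\in e$. Every automorphism of $\Gamma$ permutes the vertex factors and the edge modules compatibly, so it yields an automorphism of $N$. This gives a homomorphism $G=\operatorname{Aut}(\Gamma)\to\operatorname{Out}(N)$. It is injective because a graph automorphism moving some vertex permutes the $H_v$ nontrivially, which no inner automorphism does.

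\textbf{Surjectivity, in three steps.} We must show every automorphism $\alpha$ of $N$ is inner up to one of these.

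First, $M=\bigoplus_e M_e$ is characteristic, being the largest solvable normal subgroup of $N$. The $H_v\cong N/M\cdots$ factors are characteristic as a set, being the minimal normal subgroups of $N/M\cong H^V$. Likewise the $M_e$ form a characteristic set: they are the $N$-irreducible summands of $M$, and they are pairwise non-isomorphic, since $M_e$ determines $e$ through the set of factors acting nontrivially.

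Second, $\alpha$ therefore induces a permutation $\sigma$ of $V$ together with a compatible permutation of $E$. Here $M_e$ is sent to $M_{e'}$ exactly when $\sigma(e)=e'$, because the factors acting nontrivially on $M_e$ are $H_u,H_v$. Hence $\sigma\in\operatorname{Aut}(\Gamma)$. Composing with the corresponding automorphism, we may assume $\sigma=1$.

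Third, we show an automorphism fixing every factor is inner, by killing three possible sources of outer automorphisms.
\begin{enumerate}
\item On $N/M=H^V$ it acts factorwise by automorphisms of $H$. These are inner, so after an inner adjustment $\alpha$ is the identity on $N/M$.
\item Complements of $M$ are then controlled by $H^1(H^V,M)$. Each $M_e$ is a tensor product of two nontrivial irreducibles for the two factors, so Künneth gives $H^1(H_u\times H_v,M_e)=0$, provided $W^{H}=0$. Thus $\alpha(H^V)$ is $M$-conjugate to $H^V$, and we may assume $\alpha$ fixes $H^V$ pointwise.
\item Now $\alpha|_{M_e}$ commutes with the $H_u\times H_v$ action, so it lies in $\operatorname{End}(M_e)=\mathbb F_2$, and is therefore the identity.
\end{enumerate}
Thus $\operatorname{Out}(N)\cong G$.

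\textbf{Main obstacle.} I expect the hard part to be the characteristic-subgroup bookkeeping in the first step, together with the cohomology vanishing in the third. Concretely, we must ensure that no exotic automorphism mixes the $M_e$, or moves $H^V$ to a non-conjugate complement. I would handle the first by the non-isomorphism of the $M_e$ as $N$-modules. I would handle the second by choosing $W$ so that $H^1(H,W)$ and $H^0(H,W)$ vanish, and then applying the Künneth argument. If such a choice proves awkward, a fallback is to replace $\mathbb F_2$ by $\mathbb F_p$ and then also account for scalars $\mathbb F_p^\times$ in $\operatorname{End}(M_e)$; these can be absorbed by requiring the central elements of $H$ (or of a central extension) to induce them.
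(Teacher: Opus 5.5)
Your argument is correct, and it takes a genuinely different route from the one the paper relies on.

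The paper does not reprove this statement. It imports Cornulier's construction in Sambale's form (Theorem~\ref{thm:explicit-cornulier}):
\begin{itemize}
\item $N=P\rtimes Q$, where $P$ is a quotient of the free nilpotent group of class $|G|$ and exponent $p$ on generators $v_g$ ($g\in G$), and $Q=(\mathbb F_p^\times)^{G}$;
\item $G$ acts by regularly permuting the generators;
\item the commutator relations in $K$ cut the symmetry down from $\operatorname{Sym}(G)$ to $G$.
\end{itemize}

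You instead start from a Frucht graph and rigidify it with standard tools. The solvable radical $M$, the minimal normal subgroups of $N/M$, and the minimal normal subgroups of $N$ inside $M$ recover the graph. $\operatorname{Out}(H)=1$ handles the top. $H^1(H^V,M)=0$ gives $M$-conjugacy of complements. Absolute irreducibility over $\mathbb F_2$ leaves no scalars.

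All steps check out. Two small remarks:
\begin{itemize}
\item Only $W^H=0$ is needed, so you need not ask for $H^1(H,W)=0$. Künneth gives $H^1(H_u\times H_v,W_u\otimes W_v)=0$ from $H^0=0$ alone, and passing to $H^V$ costs nothing because the remaining factors act trivially and are perfect.
\item For $H=M_{11}$, a suitable $W$ is the $10$-dimensional heart of the $11$-point permutation module over $\mathbb F_2$. It is irreducible because every nontrivial $\mathbb F_2M_{11}$-module has dimension at least $10$ (the order of $2$ modulo $11$ is $10$). It is absolutely irreducible by $2$-transitivity.
\end{itemize}

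Your route is more conceptual and avoids the delicate commutator calculus. It also yields $Z(N)=1$ and a splitting $\operatorname{Out}(N)\to\operatorname{Aut}(N)$.

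The explicit construction, however, gives the paper things it needs later:
\begin{itemize}
\item Its $N$ is supersolvable, which gives the supersolvable kernel in Theorem~\ref{MainRes}; yours has nonabelian simple sections.
\item Property (3) of Theorem~\ref{thm:cornulier-with-properties} says $\operatorname{Inn}(N)$ is the only subgroup of $\operatorname{Aut}(N)$ isomorphic to $N$. The paper proves it using the normal Sylow $p$-subgroup $P$ and the wreath product $\operatorname{Aut}(N)/P\cong C_{p-1}\wr G$. For your $N$ it would need a separate argument before it could feed into Corollary~\ref{cor:extension-pullback-retraction}.
\end{itemize}
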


In fact, the explicit construction of $N$ (or rather some of its properties - see \cref{thm:cornulier-with-properties}) plays a role in our argument.  Our understanding of it was greatly aided by Sambale's exposition of Cornulier's construction in \cite{group-theoretic-cornulier}.

We also obtain a variant of \cref{MainRes} in the category of all (not only finite) groups.

\begin{thm} \label{thm:main:infinite}
Let $G$ be a group. Then there exists a group $\widetilde G$ and a surjective group homomorphism $\varphi \colon \widetilde G \to G$ such that for any two non-conjugate subgroups $G_1,G_2 \le G$, the pre-images $\varphi^{-1}(G_1)$ and $\varphi^{-1}(G_2) $ are not isomorphic.
\end{thm}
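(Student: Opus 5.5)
The plan is to realize $G$ as (a quotient of) an outer automorphism group in the infinite setting, and then use a ``rigid'' kernel so that any isomorphism between pre-images is forced to come from conjugation.

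\textbf{The construction.} For an arbitrary group $G$ I will look for a group $N$ with the following properties:
\begin{itemize}
\item $N$ has trivial center;
\item $N$ is characteristic in every group of the relevant form;
\item $\operatorname{Out}(N)$ contains $G$, or better, $\operatorname{Out}(N)\cong G$.
\end{itemize}
Then set $\widetilde G = \operatorname{Aut}(N)$ and let $\varphi\colon \operatorname{Aut}(N)\to \operatorname{Out}(N)\cong G$ be the natural map.

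For an infinite group one can use the known theorem (Matumoto, or graph-theoretic constructions) that every group is $\operatorname{Out}$ of some group. I would rather build $N$ directly so that it has the additional rigidity needed below. The approach I would try is a free product or graph construction: take $N$ centerless and complete-like, arranged so that each pre-image $\widetilde G_i=\varphi^{-1}(G_i)$ contains $N$ as a characteristic subgroup. For instance, $N$ could be the unique maximal normal subgroup of a certain type, such as the subgroup generated by all elements of a prescribed order that no element outside $N$ has.

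\textbf{The key step.} Suppose $\alpha\colon \widetilde G_1\to\widetilde G_2$ is an isomorphism. Since $N$ is characteristic in both pre-images, $\alpha(N)=N$, so $\alpha|_N\in\operatorname{Aut}(N)=\widetilde G$. Let $g=\alpha|_N$.

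Because $N$ is centerless, $C_{\operatorname{Aut}(N)}(N)=1$. Each $x\in\widetilde G_1$ acts on $N$ by conjugation inside $\operatorname{Aut}(N)$. Therefore
$$\alpha(x)\,\alpha(n)\,\alpha(x)^{-1}=\alpha(xnx^{-1})$$
gives $\alpha(x)=g x g^{-1}$. Hence $\widetilde G_2=g\widetilde G_1 g^{-1}$, and applying $\varphi$ gives $G_2=\varphi(g)\,G_1\,\varphi(g)^{-1}$. So $G_1$ and $G_2$ are conjugate, which is the contrapositive of the claim.

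\textbf{The main obstacle.} The difficulty is ensuring that $N$ is characteristic in every $\varphi^{-1}(G_i)$, simultaneously with $\operatorname{Out}(N)\cong G$ for arbitrary $G$. What I would try first is to take $N$ to be a free product $N=\ast_{g\in G} A_g$ of copies of a fixed group $A$ that has trivial center, is freely indecomposable, and has automorphism group small relative to $G$.

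Automorphisms of free products permute the factors up to conjugation, by Kurosh theory. To obtain exactly $G$ rather than a larger permutation group, I would decorate the free product, for example with a Cayley-graph-type amalgamation. Characteristicity would come from identifying $N$ intrinsically, for instance as the normal closure of the elements of finite order. For this I would choose $A$ torsion-generated and arrange that every torsion element of $\operatorname{Aut}(N)$ lies in $N$. If that fails, I would instead replace $N$ by $N\times N'$ with a rigid factor $N'$ whose only purpose is to pin down $N$.
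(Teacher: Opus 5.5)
Your key step is correct and is essentially the paper's Lemma~\ref{lem:rigidity}. If an isomorphism $\alpha\colon \varphi^{-1}(G_1)\to\varphi^{-1}(G_2)$ maps $N$ onto $N$ and $Z(N)=1$, then $\alpha$ is conjugation by $g=\alpha|_N\in\operatorname{Aut}(N)$, and $\varphi(g)$ conjugates $G_1$ to $G_2$.

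But there is a genuine gap. Constructing a suitable $N$ is the entire difficulty, and you leave it as the ``main obstacle'' with only speculative constructions.
\begin{itemize}
\item The free-product idea fails as stated. $\operatorname{Out}(\ast_{g\in G}A_g)$ contains factor permutations, factor automorphisms and partial conjugations (conjugating one factor by an element of another), so it is far larger than $G$. Removing all of these while keeping exactly $G$ is the whole content of a realization theorem, and you do not provide one.
\item The torsion-closure idea needs every torsion element of $\operatorname{Aut}(N)$ to lie in $N$. You give no way to arrange this, and it fails whenever $\varphi$ splits over a nontrivial finite subgroup of $G$.
\item ``$N$ is characteristic in both pre-images'' does not by itself give $\alpha(N)=N$ for an isomorphism between two \emph{different} groups. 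You need a description of $N$ inside each pre-image that is carried along by isomorphisms.
\end{itemize}
Citing Matumoto's theorem alone would not close the gap, since it realizes $G$ as an $\operatorname{Out}$ group without any such intrinsic description of the inner automorphisms.

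The paper supplies the missing ingredient through simplicity. By Theorem~\ref{thm:DGG} there is a nonabelian simple group $S$ with $\operatorname{Out}(S)\cong G$; take $\widetilde G=\operatorname{Aut}(S)$, so $\ker\varphi=\mathrm{Inn}(S)\cong S$. Let $\mathrm{Inn}(S)\le P\le\operatorname{Aut}(S)$ and let $M\trianglelefteq P$ be nontrivial. Since $S$ is simple, either $\mathrm{Inn}(S)\le M$, or $M\cap\mathrm{Inn}(S)=1$. In the second case $[M,\mathrm{Inn}(S)]=1$, so $M$ lies in the centralizer of $\mathrm{Inn}(S)$, which is trivial because $Z(S)=1$. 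Hence $\mathrm{Inn}(S)$ is the unique minimal nontrivial normal subgroup of every such $P$. This property is preserved by isomorphisms, so every isomorphism $\varphi^{-1}(G_1)\to\varphi^{-1}(G_2)$ carries $\ker\varphi$ onto itself, and your rigidity argument then finishes the proof.
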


The proof of \cref{thm:main:infinite} uses the following realization theorem from \cite{DGG}.

\begin{thm}[Droste--Giraudet--G\"obel]\label{thm:DGG}
For every group $G$ there exists a nonabelian simple group $S$ such that
$\mathrm{Out}(S) \cong G$.
\end{thm}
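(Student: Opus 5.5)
The plan is to realize $S$ as a group of order-preserving permutations of a suitably constructed linear order (chain), following the strategy of Droste, Giraudet and G\"obel. The reason for working with chains is that three separate facts about them will combine to compute $\mathrm{Out}(S)$:
\begin{itemize}
\item there is a good simplicity theorem for bounded automorphisms;
\item there is a reconstruction theorem identifying automorphisms of the permutation group with symmetries of the chain;
\item the quotient of the full automorphism group by the bounded part is governed only by the ``germs at the ends'' of the chain.
\end{itemize}

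\textbf{Step 1 (the simple group).} Fix a complete, doubly homogeneous chain $\Omega$, and let $A=\mathrm{Aut}(\Omega)$ be its group of order-automorphisms. Let $B\trianglelefteq A$ be the subgroup of \emph{bounded} automorphisms, i.e.\ those whose support lies in a bounded interval. By the Higman--Lloyd simplicity theorem, $B$ is simple and nonabelian, since double homogeneity gives enough transitivity to run the commutator argument. We will take $S=B$.

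\textbf{Step 2 (reconstruction).} Next we identify $\mathrm{Aut}(B)$. By McCleary's reconstruction results for ordered permutation groups, every automorphism of $B$ is induced by conjugation by an order-automorphism or an order-anti-automorphism of $\Omega$. The idea is to show that $B$ determines $\Omega$ up to reversal, for instance by recovering points or intervals from centralizers of suitable subgroups.

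We will build $\Omega$ so that it admits no anti-automorphism, e.g.\ by giving the two ends different cofinalities or coinitialities. Then $\mathrm{Aut}(B)\cong N_{\mathrm{Sym}(\Omega)}(B)=A$. Since $B$ is centreless, $\mathrm{Inn}(B)\cong B$, and hence
\[
\mathrm{Out}(B)\cong A/B .
\]

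\textbf{Step 3 (encoding $G$).} It remains to choose $\Omega$ so that $A/B\cong G$. An element of $A$ is determined modulo $B$ by its germs at $-\infty$ and at $+\infty$, so $A/B$ embeds into $L\times R$, where $L$ and $R$ are the groups of germs at the two ends. This map is onto when $\Omega$ is built by gluing.

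The construction is to glue a left piece whose germ group is trivial to a right piece whose germ group is exactly $G$. The right piece would be a ``coloured'' chain in which $G$ acts freely, say as a lexicographic sum of copies of a homogeneous chain indexed cofinally by a $G$-set. Rigid markers, in the spirit of Ohkuma or Dushnik--Miller rigid chains of distinct cofinal types, are then inserted so that the only cofinal germs are those coming from $G$.

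\textbf{Main obstacle.} The hard part is Step 3. One has to arrange simultaneously that $\Omega$ stays doubly homogeneous, so that Steps 1 and 2 still apply, and that the germ group at $+\infty$ is exactly $G$ with nothing extra. Homogeneity pushes toward many automorphisms, while rigidity of the markers pushes toward few.

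I would first try a transfinite construction of length $\kappa>|G|$. At each stage one adds the back-and-forth witnesses needed for double homogeneity inside bounded intervals, while keeping the cofinal coding rigid. The key point to exploit is that bounded automorphisms never see the ends, so homogeneity can be forced locally without affecting the germ group.
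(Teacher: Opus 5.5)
\textbf{The choice $S=B(\Omega)$ cannot work.} The paper gives no proof of this statement; it simply cites \cite{DGG}. Your Steps 1 and 2 are standard. They use Lloyd's simplicity of $B(\Omega)$, a McCleary-type reconstruction, and $C_A(B)=1$, which gives $A/B\hookrightarrow\operatorname{Out}(B)$. Step 3, however, is not merely hard: it is impossible.

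Let $\Omega$ be any doubly homogeneous chain with at least three points, complete or not. It is then dense and has no endpoints. Fix a strictly increasing cofinal sequence $(a_\alpha)_{\alpha<\kappa}$.
\begin{enumerate}
\item For $a_\alpha<x<y<a_{\alpha+1}$, double homogeneity gives $g_1\in A$ with $g_1(a_\alpha)=a_\alpha$, $g_1(x)=y$, and $g_2\in A$ with $g_2(x)=y$, $g_2(a_{\alpha+1})=a_{\alpha+1}$.
\item Gluing $g_1|_{[a_\alpha,x]}$ and $g_2|_{[x,a_{\alpha+1}]}$, and taking the identity elsewhere, gives a nontrivial $h_\alpha\in A$ supported in $[a_\alpha,a_{\alpha+1}]$.
\item For $X\subseteq\kappa$, let $f_X$ agree with $h_\alpha$ on $[a_\alpha,a_{\alpha+1}]$ for $\alpha\in X$ and be the identity elsewhere. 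Then $f_X\in A$, and $f_Xf_Y^{-1}$ is the identity on a final segment if and only if $X\triangle Y$ is bounded in $\kappa$.
\item Splitting $\kappa$ into $\kappa$ disjoint unbounded sets and taking unions shows that the germ group at $+\infty$ has at least $2^{\kappa}\ge 2^{\aleph_0}$ elements. The same holds at $-\infty$.
\end{enumerate}
Hence $|\operatorname{Out}(B(\Omega))|\ge |A/B|\ge 2^{\aleph_0}$ for \emph{every} doubly homogeneous chain. No choice of $\Omega$ realizes a finite, or even countable, $G$ (the trivial group included). In particular, a left piece with trivial germ group does not exist.

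The rigid markers cannot rescue this either. Double homogeneity makes any two bounded intervals isomorphic, so nothing can be marked locally. A complete chain has no cuts to mark. The ends are therefore the only place information can live, and there the germ group is always enormous.

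\textbf{What is actually missing.} You do not need a cleverer transfinite construction; you need a different simple group, one without ends that contribute outer automorphisms. For instance, a cyclically ordered set has no germs at infinity, and its full automorphism group can itself be simple, as $\mathrm{Homeo}^+(S^1)$ is. With such a group one would then need three things:
\begin{enumerate}
\item a reconstruction theorem identifying its automorphisms with (anti-)automorphisms of the Dedekind completion that normalize it;
\item an explicit coding of $G$ in that completion, for example through cuts of prescribed types;
\item enough homogeneity left over to keep simplicity.
\end{enumerate}
Some coding of this kind is the real content of the cited result. Your proposal does not supply it. In the form you set it up, it cannot, because its target $A/B\cong G$ is unattainable.
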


\subsection{Application to $R$--coset equivalence}

Let $R$ be a commutative ring with identity and let $G$ be a group with subgroups $G_1,G_2$. We say that $G_1, G_2$ are {\em $R$--coset equivalent} if $R[G/G_1] \cong R[G/G_2]$ as $RG$--modules. A trivial example of $R$--coset equivalent subgroups is provided by conjugate subgroups. Also, the property of $R$--coset equivalence is preserved under pullbacks by epimorphisms, see Lemma \ref{L:RCos}.

When $R=\mathbb{Q}$, such pairs (also known as {\em Gassmann/Sunada pairs}) have been much studied since they arise naturally in the study of arithmetic equivalence and Dedekind zeta functions of number fields (see \cite{Perlis} and the book \cite{Klingen}) and were also used by Sunada \cite{Sunada} in his construction of isospectral Riemannian manifolds. There are, by now, many constructions of Gassmann pairs and, indeed,  there are many examples where $G_1$ and $G_2$ are not isomorphic; e.g. from \cite[Lemma 3.1]{Sp} if $G$ is a group containing the alternating group $A_{27}$ and $G_1 =(\mathbb{Z}/3\mathbb{Z})^3$ and $G_2 =$ the $3\times 3$ Heisenberg group over $\mathbb{Z}/3\mathbb{Z}$, then these can be embedded in $A_{27}$ such that $G_1$ and $G_2$ are $\mathbb{Q}$--coset equivalent.

More recently, applications of the case when $R=\mathbb{Z}$ have been explored: for example, the existence of a $\mathbb{Z}$--coset equivalent pair of non-conjugate subgroups found by L. Scott in \cite{Scott} (see below for a discussion) was used by D. Prasad in \cite{Prasad} to produce non-isomorphic number fields with isomorphic multiplicative groups and a bijection between their abelian extensions. The paper \cite{Prasad} also produces non-isomorphic Riemann surfaces with isomorphic Jacobians when viewed without their polarizations.  Other applications can be found in the paper \cite{AKMS} which used non-conjugate $\mathbb{Z}$--coset equivalent pairs to provide examples of non-isomorphic algebraic surfaces with isomorphic Jacobians as well as proving general results relating the cohomology of spaces constructed from such pairs using covering space theory. The paper \cite{GM} goes further and showed, among other things, the existence of non-isomorphic number fields for which the maximal pronilpotent quotients of the associated absolute Galois groups are isomorphic. These spaces also have the same nilpotent representation theory (which we will not describe carefully here).

However, unlike the case of  $\mathbb{Q}$--coset equivalent subgroups, it is worth emphasizing that there are very few examples of non-conjugate $\mathbb{Z}$--coset equivalent pairs known. The first non-conjugate pair was that found by Scott in \cite{Scott} (mentioned above): in this case $G = \mathrm{PSL}(2,29)$ and $G_1,G_2$ are a pair of non-conjugate  subgroups (of index $203$) isomorphic to $A_5$. The only other known method for producing additional examples from this one pair is via pullbacks by epimorphisms to $\mathrm{PSL}(2, 29)$, or products of such. Scott in \cite{Scott} also observed that when $p \equiv \pm 29 \mod 120$, there exists a pair of non-conjugate $A_5$ subgroups of $\mathrm{PSL}(2,p)$ that Scott showed satisfies a necessary condition to be $\mathbb{Z}$--coset equivalent but this has only been verified to be $\mathbb{Z}$--coset equivalent when $p=29$.  

Some motivation for this note comes from a question raised in Prasad's paper \cite{Prasad} and explored by Sutherland in \cite{Sutherland}:

\begin{question}[D. Prasad]\label{finite_iso} 
If $G$ is a finite group and $G_1,G_2 \leq G$ are $\mathbb{Z}$--coset equivalent, are $G_1,G_2$ necessarily isomorphic?
\end{question}

The following is an immediate consequence of \cref{MainRes}.

\begin{thm} \label{MainCor}
Question \ref{finite_iso} has a negative answer.
\end{thm}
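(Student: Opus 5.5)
We answer Question \ref{finite_iso} negatively by combining Scott's example with \cref{MainRes} and the fact that $\mathbb{Z}$--coset equivalence survives pullback (Lemma \ref{L:RCos}). Let $G=\mathrm{PSL}(2,29)$, and let $G_1,G_2\cong A_5$ be the non-conjugate subgroups of index $203$ found by Scott in \cite{Scott}. These satisfy $\mathbb{Z}[G/G_1]\cong\mathbb{Z}[G/G_2]$ as $\mathbb{Z}G$--modules. By \cref{MainRes} there is a finite group $\widetilde G$ with a surjection $\varphi\colon\widetilde G\to G$ such that $\widetilde G_1=\varphi^{-1}(G_1)$ and $\widetilde G_2=\varphi^{-1}(G_2)$ are not isomorphic, since $G_1,G_2$ are non-conjugate. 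It then suffices to show that $\widetilde G_1$ and $\widetilde G_2$ are $\mathbb{Z}$--coset equivalent in $\widetilde G$.

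For this step I invoke Lemma \ref{L:RCos}, but the content is elementary. The surjection $\varphi$ induces bijections of $\widetilde G$-sets
\[
\widetilde G/\widetilde G_i \;\to\; G/G_i, \qquad i=1,2,
\]
where $\widetilde G$ acts on the right-hand side through $\varphi$. Consequently
\[
\mathbb{Z}[\widetilde G/\widetilde G_i]\;\cong\;\operatorname{Inf}_G^{\widetilde G}\,\mathbb{Z}[G/G_i].
\]
Inflation along $\varphi$ is a functor from $\mathbb{Z}G$--modules to $\mathbb{Z}\widetilde G$--modules: it is restriction of scalars along the ring map $\mathbb{Z}\widetilde G\to\mathbb{Z}G$. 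So it carries an isomorphism $\mathbb{Z}[G/G_1]\cong\mathbb{Z}[G/G_2]$ of $\mathbb{Z}G$--modules to an isomorphism of $\mathbb{Z}\widetilde G$--modules. Hence $\widetilde G_1$ and $\widetilde G_2$ are $\mathbb{Z}$--coset equivalent but not isomorphic, which answers the question in the negative.

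The only substantive input beyond \cref{MainRes} is the existence of a non-conjugate $\mathbb{Z}$--coset equivalent pair in some finite group, and Scott's example supplies it. I expect no step here to be an obstacle; all the difficulty lies in \cref{MainRes} itself. One point to check carefully is that \cref{MainRes} applies simultaneously to every pair of non-conjugate subgroups, so that no choice depending on $G_1,G_2$ is needed. Another is that $\widetilde G$ is finite, so the counterexample lies within the scope of the question.
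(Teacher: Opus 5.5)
Your proposal is correct and follows the paper's argument: you pull back Scott's non-conjugate $\mathbb{Z}$--coset equivalent $A_5$ subgroups of $\mathrm{PSL}(2,29)$ along the finite extension from \cref{MainRes}, which makes the preimages non-isomorphic, while \cref{L:RCos} keeps them $\mathbb{Z}$--coset equivalent; your inflation argument is the same as the paper's proof of that lemma. The uniformity of \cref{MainRes} over all pairs is not needed here, since only the one pair from Scott's example is involved.
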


Indeed, take for instance $G = \mathrm{PSL}(2,29)$ and the non-conjugate subgroups $G_1, G_2$ from \cite{Scott} that are $\mathbb Z$--coset equivalent, \cref{MainRes} furnishes us with a finite group $\widetilde G$ and a surjective group homomorphism $\varphi \colon \widetilde G \to G$ for which $\varphi^{-1}(G_1)$ and $\varphi^{-1}(G_2)$ are, in view of \cref{L:RCos}, $\mathbb Z$--coset equivalent subgroups that are not isomorphic.

In light of this, we propose the following version of Question \ref{finite_iso}.

\begin{question}
If $G$ is a finite group and $G_1,G_2 \leq G$ are $\mathbb{Z}$--coset equivalent subgroups that contain no non-trivial normal subgroup of $G$, are $G_1,G_2$ necessarily isomorphic?
\end{question}

Given a finite group $G$ and non-conjugate subgroups $G_1$ and $G_2$ we are also interested in the question of which (finite) groups $\widetilde G$ admit a surjective group homomorphism onto $G$ such that the pre-images of $G_1$ and $G_2$ are not isomorphic. \cref{MainRes} and its proof provide us with some information in this direction, and in the rest of the introduction we explore other strategies of producing a suitable $\widetilde G$ and its ensuing properties.

\subsection{Anabelian Geometry}

Suppose that we can realize $G$ as the Galois group of a (finite) Galois extension $K$ of $\mathbb Q$, and let $\widetilde G$ be the absolute Galois group of $\mathbb Q$ so that we have a natural (continuous) surjective homomorphism from the profinite group $\widetilde G$ onto $G$ given by restriction of automorphisms to $K$. Then the pre-images $\widetilde{G}_1$ and $\widetilde G_2$ of $G_1$ and $G_2$ are not isomorphic. Indeed, if they were isomorphic, it would follow from the Neukirch--Uchida theorem (see for instance \cite{KS}) that they are conjugate in $\widetilde G$, so we would get that $G_1$ and $G_2$ are conjugate, contrary to our assumption.

While it is possible to use in this argument variants of the Neukirch--Uchida theorem (see for instance \cite[Introduction]{KS}) which work with (much smaller) quotients of $\widetilde G$ in place of $\widetilde G$ itself, it is unclear how to deduce from this a result such as Theorem \ref{MainRes} where the groups are finite (and not merely profinite, contrast with Proposition \ref{CorProfComp}). It would however be interesting to see if, even just in the realm of profinite groups, one can obtain additional variants of \cref{MainRes} using other results from anabelian geometry (that perhaps do not necessitate the, at present conjectural, realization of $G$ as a Galois group). These could be, for instance, variants of the Neukirch--Uchida theorem over function fields.

We mention also that from the perspective of the Neukirch--Uchida theorem, the supersolvability of the kernel of $\varphi$ from (the proof of) \cref{MainRes} seems related to the question, raised in \cite{KS}, of whether a number field can be recovered from the maximal prosupersolvable quotient of its absolute Galois group. The relevance of coset equivalence to that perspective is considered in \cite{GM}.

\subsection{Profinite Completions}

In some cases it is more natural/convenient to construct first a profinite group $\widetilde G$ having the desired properties. For the purpose of passing from profinite groups to finite groups in our constructions of $\widetilde G$ we use the following consequence of \cref{SmallGroupProp}.

\begin{prop} \label{CorProfComp}
Let $\Gamma$ be a finitely generated group, let $G$ be a finite group with subgroups $G_1, G_2$, and let $\varphi \colon \Gamma \to G$ be a surjective homomorphism. Suppose that the profinite completions of $\varphi^{-1}(G_1)$ and $\varphi^{-1}(G_2)$ are not isomorphic. Then $\varphi$ factors through a surjective homomorphism $\Gamma \to H$ to a finite group in which the inverse images of $G_1$ and $G_2$ are not isomorphic.
\end{prop}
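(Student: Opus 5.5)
The plan is to pass from profinite non-isomorphism to non-isomorphism at a finite level by exploiting finite generation. Write $\Gamma_i = \varphi^{-1}(G_i)$. Each $\Gamma_i$ has finite index in $\Gamma$, so it is finitely generated. For a finitely generated group $\Delta$, the profinite completion $\widehat\Delta$ is determined by the set of isomorphism classes of finite quotients of $\Delta$. This is the standard fact (Dixon--Formanek--Poland--Ribes) that two finitely generated profinite groups with the same finite quotients are isomorphic; I expect this is essentially the content of \cref{SmallGroupProp}. Since $\widehat{\Gamma_1}\not\cong\widehat{\Gamma_2}$, there is a finite group $Q$ that is a quotient of one of them, say $\Gamma_1$, but not of the other (after possibly swapping indices).

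The next step is to descend the relevant quotient to a finite quotient of $\Gamma$. Let $M\trianglelefteq \Gamma_1$ be the kernel of a surjection $\Gamma_1\to Q$. Let $N$ be the normal core in $\Gamma$ of $M\cap \ker\varphi$; it is a finite-index normal subgroup of $\Gamma$ contained in $\ker\varphi$. Set $H=\Gamma/N$. Then $\varphi$ factors as $\Gamma\to H\to G$, and the preimage $H_i$ of $G_i$ in $H$ equals $\Gamma_i/N$.

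Now compare the two preimages in $H$. Since $N\le M$, the group $H_1=\Gamma_1/N$ surjects onto $\Gamma_1/M\cong Q$. Suppose, for contradiction, that $H_1\cong H_2$. Then $H_2=\Gamma_2/N$ would surject onto $Q$, and composing with $\Gamma_2\to\Gamma_2/N$ would make $Q$ a quotient of $\Gamma_2$. That contradicts the choice of $Q$, so $H_1\not\cong H_2$.

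The main obstacle is the first step, which needs a finite quotient $Q$ distinguishing the two groups. Specifically, one needs to know that the profinite completions of finitely generated groups are determined by their finite quotients. If \cref{SmallGroupProp} is instead a quantitative statement, then I would use it in the same way. It would then directly supply a finite quotient witnessing non-isomorphism, perhaps with a bound on its size, and the remainder of the argument is unchanged.
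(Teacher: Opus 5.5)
Your argument is correct and is essentially the paper's. The paper proves the profinite version, \cref{SmallGroupProp}. There it takes a finite quotient $H_0$ of one preimage that is not a quotient of the other (via the finite-quotient determination theorem, Wilson Thm.~4.2.4), passes to a finite quotient of $\widehat\Gamma$ through which both $\varphi$ and the map onto $H_0$ factor, and then specializes to $\widehat\Gamma$; you run the same argument directly in $\Gamma$, with an explicit normal core. One correction to your guess: \cref{SmallGroupProp} is not the finite-quotients theorem itself but the profinite analogue of your descent step, with that theorem as its input.
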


The extra information we may obtain from such a statement, and perhaps not from \cref{MainRes}, is that the finite group $\widetilde G$ in which we need to take the preimages of $G_1$ and $G_2$ for them not to be isomorphic is a quotient of $\Gamma$. With this in mind, in \S  \ref{Proof_C1}, we prove the following alternative to \cref{MainRes} in a special case of a certain group $\Gamma$ that admits a group homomorphism onto $\mathrm{PSL}(2, 29)$.

\begin{thm}\label{C:1}
Let $G=\mathrm{PSL}(2,29)$, and let $G_1$, $G_2$ be non-conjugate subgroups of $G$ isomorphic to $A_5$. Let $T \subset \mathbb{H}^3$ be the tetrahedron described by the $(3,5,3)$-Coxeter diagram, and let $\Gamma$ denote the subgroup of index $2$ in the group generated by reflections in the faces of $T$ consisting of orientation-preserving isometries. Then there exists a finite quotient group $Q$ of $\Gamma$ and an epimorphism $Q \to G$ for which the pre-images of $G_1$ and $G_2$ are not isomorphic.
\end{thm}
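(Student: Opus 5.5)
The plan is to produce the finite quotient $Q$ via \cref{CorProfComp}, applied to $\Gamma$ and a suitable epimorphism $\varphi\colon \Gamma \to G=\mathrm{PSL}(2,29)$. It therefore suffices to construct $\varphi$ and then show that the profinite completions $\widehat{\Gamma}_1$, $\widehat{\Gamma}_2$ of $\Gamma_i=\varphi^{-1}(G_i)$ are not isomorphic.

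\textbf{Step 1: constructing $\varphi$.} The group $\Gamma=[3,5,3]^+$ is a cocompact arithmetic Kleinian group. Its invariant trace field is $k=\mathbb{Q}(\sqrt5)$ and its invariant quaternion algebra $B$ ramifies at one real place. Since $29\equiv 4 \pmod 5$ is a square mod $5$, the prime $29$ splits in $k$; choose a prime $\mathfrak p\mid 29$ at which $B$ is unramified. Reducing an order containing (a finite-index subgroup or lift of) $\Gamma$ modulo $\mathfrak p$ gives a homomorphism $\Gamma\to\mathrm{PSL}(2,\mathbb F_{29})$. Surjectivity should follow from strong approximation, or directly from the fact that the vertex stabilizers of the tetrahedron are copies of $A_5=[3,5]^+$. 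Their images are $A_5$'s generating $\mathrm{PSL}(2,29)$, because the only maximal subgroups containing $A_5$ are $A_5$ itself. One can also arrange that the two vertex stabilizers map to representatives of the two conjugacy classes of $A_5$, which ties $G_1,G_2$ to the geometry.

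\textbf{Step 2: the discrete groups are not isomorphic.} Suppose $\Gamma_1\cong\Gamma_2$. By Mostow rigidity the isomorphism is conjugation by some $g\in\mathrm{Isom}(\mathbb H^3)$, and $g$ lies in the commensurator of $\Gamma$. Because $\Gamma$ is a maximal arithmetic group, one has to track $g$ adelically: locally at every prime $g$ normalizes the relevant maximal orders, except possibly at finitely many primes where $g$ acts as an Atkin--Lehner type involution. One then checks that the resulting conjugation at $\mathfrak p$ would have to carry $G_1$ to $G_2$ inside $\mathrm{PSL}(2,29)$ rather than merely inside $\mathrm{PGL}(2,29)$. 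This is the subtle point, since Scott's two classes of $A_5$ \emph{are} swapped by $\mathrm{PGL}(2,29)$. The distinction should come from the fact that $\Gamma$ is generated by elements of reduced-norm-square type, whereas an element inducing the outer automorphism at $\mathfrak p$ would have non-square norm at $\mathfrak p$ while being a global unit. The aim is to rule this out via the class group and unit computation for $\mathbb{Q}(\sqrt5)$: the fundamental unit is totally positive only up to sign, and we check its residue mod $\mathfrak p$.

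\textbf{Step 3: upgrading to profinite completions (main obstacle).} I expect this step to be the main obstacle. I would use the Galois rigidity method of Bridson--McReynolds--Reid--Spitler, who showed that $[3,5,3]^+$ is profinitely rigid. The method runs as follows. An isomorphism $\widehat{\Gamma}_1\cong\widehat{\Gamma}_2$ identifies their $\mathrm{PSL}_2$ character varieties over finite fields, hence the trace fields and quaternion algebras. Up to a Galois twist, it is then induced by an isomorphism of congruence completions, which is conjugation by an element of the adelic group $B^\times(\mathbb A_f)$. Restricting to the $\mathfrak p$-adic factor and reducing mod $\mathfrak p$ gives an element of $\mathrm{PGL}(2,29)$ conjugating $G_1$ to $G_2$. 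What remains is the same parity obstruction as in Step 2, now applied to an adelic element with no global rationality available. The first thing I would try is a Galois twist by $\sqrt5\mapsto-\sqrt5$. This permutes the two primes above $29$, and comparing with the other factor should force the local norm at $\mathfrak p$ to be a square. If that fails, I would instead distinguish $\widehat{\Gamma}_1$ and $\widehat{\Gamma}_2$ by a finite-quotient invariant, namely counts of homomorphisms to small $\mathrm{PSL}(2,q)$, computed from the character varieties. Once non-isomorphism is established, \cref{CorProfComp} yields the required finite quotient $Q$.
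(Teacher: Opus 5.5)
\textbf{There is a genuine gap.} The whole content of the theorem is the non-isomorphism $\widehat{\Gamma}_1\not\cong\widehat{\Gamma}_2$, and your proposal never proves it.

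Step 2 is about the discrete groups. That cannot be fed into \cref{CorProfComp} without profinite rigidity of finite-index subgroups of $\Gamma$, which is not known (the paper remarks on exactly this). Its ``parity'' argument is also only sketched.

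Step 3 is a list of things to try, and its main route rests on unsupported claims:
\begin{enumerate}
\item It assumes an abstract isomorphism $\widehat{\Gamma}_1\cong\widehat{\Gamma}_2$ descends to congruence completions and is adelic conjugation. But Kleinian groups fail the congruence subgroup property, and Galois rigidity of $\Gamma$ does not automatically pass to $\Gamma_1,\Gamma_2$.
\item It uses a twist $\sqrt5\mapsto-\sqrt5$ that is not available. The invariant trace field is not $\mathbb{Q}(\sqrt5)$, since a finite-covolume Kleinian group cannot have real invariant trace field. The Gram matrix of $T$ has entries in $\mathbb{Q}(\sqrt5)$ and determinant $(3-2\sqrt5)/16$. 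So $k=\mathbb{Q}(\sqrt{3-2\sqrt5})$, a non-Galois quartic field with one complex place and $k\cap\mathbb{R}=\mathbb{Q}(\sqrt5)$.
\end{enumerate}
Modulo $29$ we have $\sqrt5\equiv\pm11$, so $3-2\sqrt5\equiv 10$ or $25$, and $10$ is a non-square. Hence one prime of $\mathbb{Q}(\sqrt5)$ above $29$ is inert in $k$, while the other splits into $\mathfrak P,\overline{\mathfrak P}$ with residue field $\mathbb{F}_{29}$. Since the two primes behave differently in $k$, no automorphism of $k$ exchanges them.

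\textbf{The parity obstruction is not automatic, and the arrangement you suggest in Step 1 would be fatal.} The symmetry of the $3$--$5$--$3$ diagram is realized by an isometry of $T$. Composing with a reflection if necessary, it gives an orientation-preserving $\tau$ that normalizes $\Gamma$ and exchanges the two $A_5$ vertex stabilizers up to $\Gamma$-conjugacy. By Skolem--Noether, $\tau$ is represented by an element of $B^\times$ normalizing $\mathcal{O}_k[\Gamma]$. So it preserves the kernel of reduction modulo $\mathfrak P$ and induces an automorphism $\bar\tau$ of $\mathrm{PSL}(2,29)$. If the two vertex groups landed in non-conjugate $A_5$'s, $\bar\tau$ would be outer and would swap the two classes. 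Then $\tau$ would conjugate $\Gamma_1$ onto a $\Gamma$-conjugate of $\Gamma_2$, and that $\varphi$ could never work.

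So whether an obstruction exists is a concrete computation, and the paper simply does it:
\begin{enumerate}
\item Magma exhibits an explicit epimorphism $\Gamma\to\mathrm{PSL}(2,29)$.
\item It computes presentations of the two index-$203$ preimages, both of which are perfect.
\item It finds $1$ versus $5$ index-$5$ subgroups up to conjugacy. This is an invariant of the profinite completion, so \cref{CorProfComp} applies.
\end{enumerate}
Your fallback of counting maps to small $\mathrm{PSL}(2,q)$ is this idea with $A_5\cong\mathrm{PSL}(2,5)$: for perfect groups, index-$5$ subgroups up to conjugacy correspond to epimorphisms onto $A_5$ up to automorphism. But you neither carry out the count nor give any reason the two counts should differ. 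As written, the proposal does not prove the theorem.
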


Our discussion is closely related to the subject of profinite rigidity - see for instance \cite{Rei}. Indeed, if in \cref{C:1} the finite-index subgroups of $\Gamma$ are determined up to isomorphism by their profinite completion, we can weaken the assumption to saying that the (discrete) groups $\varphi^{-1}(G_1)$ and $\varphi^{-1}(G_2)$ are not isomorphic.

If we also know that finite-index subgroups of $\Gamma$ are isomorphic only if they are conjugate, then we obtain a version of \cref{MainRes} for a suitable finite quotient of $\Gamma$. There are at least two settings where this can potentially be applied.

One possible setting is that of mapping class groups of closed orientable surfaces: while their finite-index subgroups are isomorphic only if they are conjugate in view of \cite[Theorem 5]{Iv}, we do not yet have the required profinite rigidity results.

A similar possible setting is that of maximal non-arithmetic lattices in Lie groups such as $O(n,1)/\{\pm 1\}$ that exist by \cite{GPS} and \cite{Margulis}. In this case, by work of Margulis, there is a unique maximal lattice in the commensurability class and for any pair of finite index subgroups, they are isomorphic as abstract groups if and only if they are conjugate in the maximal lattice. Profinite rigidity remains a possibility here as well.

\section{Permanence of Coset Equivalence Under Pullbacks}

\begin{lem} \label{L:RCos}
If $R$ is a ring, $G_1,G_2 \leq G$ are $R$--coset equivalent, and $\psi\colon \Gamma \to G$ is a surjective homomorphism of groups with $\Gamma_j = \psi^{-1}(G_j)$ for $j=1,2$, then $\Gamma_1,\Gamma_2$ are $R$--coset equivalent as well.
\end{lem}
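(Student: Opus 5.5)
The plan is to realize the permutation module of $\Gamma$ on $\Gamma/\Gamma_j$ as the inflation of the permutation module of $G$ on $G/G_j$ along $\psi$, and then note that inflation preserves isomorphism. The argument has two steps.

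\textbf{Step 1: identify the coset sets.} Since $\psi$ is surjective and $\Gamma_j = \psi^{-1}(G_j)$ contains $K=\ker\psi$, the map
\[
\Gamma/\Gamma_j \to G/G_j, \qquad \gamma\Gamma_j \mapsto \psi(\gamma)G_j,
\]
is well defined and injective: $\psi(\gamma)G_j=\psi(\gamma')G_j$ iff $\psi(\gamma^{-1}\gamma')\in G_j$ iff $\gamma^{-1}\gamma'\in\Gamma_j$. It is surjective because $\psi$ is. It is also $\Gamma$-equivariant, where $\Gamma$ acts on $G/G_j$ through $\psi$. Extending $R$-linearly gives an isomorphism of $R\Gamma$-modules
\[
R[\Gamma/\Gamma_j]\cong \operatorname{Inf}_G^\Gamma R[G/G_j],
\]
where $\operatorname{Inf}$ denotes restriction of scalars along the ring homomorphism $R\Gamma\to RG$ induced by $\psi$.

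\textbf{Step 2: transport the isomorphism.} Suppose $f\colon R[G/G_1]\to R[G/G_2]$ is an $RG$-module isomorphism. The same underlying $R$-linear bijection commutes with the action of each $\gamma\in\Gamma$, because $\gamma$ acts on both sides as $\psi(\gamma)$. Hence $f$ is an $R\Gamma$-isomorphism of the inflated modules. Composing with the identifications from Step 1 yields $R[\Gamma/\Gamma_1]\cong R[\Gamma/\Gamma_2]$ as $R\Gamma$-modules, which is exactly the claim.

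There is no real obstacle here. The only point needing care is the bijection in Step 1, which uses both that $\Gamma_j$ contains $\ker\psi$ and that $\psi$ is surjective. No finiteness of $\Gamma$ is required, so the argument applies equally in the setting of \cref{thm:main:infinite}.
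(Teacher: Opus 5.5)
Your proof is correct and follows the paper's argument: identify $\Gamma/\Gamma_j$ with $G/G_j$ as $\Gamma$-sets, so that $R[\Gamma/\Gamma_j]\cong R[G/G_j]$ as $R\Gamma$-modules, and then observe that an $RG$-isomorphism $R[G/G_1]\cong R[G/G_2]$ is automatically an $R\Gamma$-isomorphism when $\Gamma$ acts through $\psi$. Your explicit check of the bijection in Step 1 fills in the detail that the paper leaves to the reader.
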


\begin{proof}
Note that $\Gamma/\Gamma_j \cong G/G_j$ as $\Gamma$--sets for $j=1,2$ where the latter acts via $\psi$. In particular, for any ring $R$, we have $R[\Gamma/\Gamma_j] \cong R[G/G_j]$ as $R[\Gamma]$--modules. By assumption $R[G/G_1] \cong R[G/G_2]$ as $R[G]$--modules and hence as $R[\Gamma]$--modules. Combining these two observations, we see that 
\[ R[\Gamma/\Gamma_1] \cong R[G/G_1] \cong R[G/G_2] \cong R[\Gamma/\Gamma_2] \]
as $R[\Gamma]$--modules as claimed.
\end{proof}

\section{From Profinite to Finite Groups}

\begin{prop} \label{SmallGroupProp}
Let $G$ be a finite group, let $G_1$, $G_2$ be subgroups of $G$, and let $\varphi \colon \widetilde G \to G$ be a continuous surjective homomorphism of profinite groups for which the profinite groups $\varphi^{-1}(G_1)$ and $\varphi^{-1}(G_2)$ are not isomorphic. Assume that there are only finitely many open subgroups of any given index in $\widetilde G$. Then there exists a continuous surjective homomorphism $\psi \colon \widetilde G \to H$ onto a finite group, and a group homomorphism $\theta \colon H \to G$ such that $\varphi = \theta \circ \psi$ and $\theta^{-1}(G_1)$ and $\theta^{-1}(G_2)$ are not isomorphic.
\end{prop}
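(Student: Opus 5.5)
Set $\widetilde G_i = \varphi^{-1}(G_i)$ and argue by contradiction: suppose that for every open normal subgroup $N \le \ker\varphi$ of $\widetilde G$ the finite groups $\widetilde G_1/N$ and $\widetilde G_2/N$ are isomorphic. These are exactly $\theta^{-1}(G_1)$ and $\theta^{-1}(G_2)$ for $H = \widetilde G/N$, with $\theta$ induced by $\varphi$. Every continuous surjection $\psi$ onto a finite group through which $\varphi$ factors has this form with $N = \ker\psi$, so under our assumption all the finite quotients in question give isomorphic preimages. We will build from this a continuous isomorphism $\widetilde G_1 \cong \widetilde G_2$, contradicting the hypothesis.

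\textbf{Step 1: a cofinal chain.} Since $\widetilde G$ has only finitely many open subgroups of each index, for each $n$ the intersection $U_n$ of all open subgroups of index at most $n$ is an open normal subgroup, and it is characteristic in any sense we need. The $U_n$ form a decreasing chain with trivial intersection, so $\widetilde G = \varprojlim \widetilde G/U_n$. Replacing $U_n$ by $U_n \cap \ker\varphi$ keeps these properties. The same finiteness passes to the open subgroups $\widetilde G_i$: an open subgroup of $\widetilde G_i$ of index $m$ has index at most $m[\widetilde G:\widetilde G_i]$ in $\widetilde G$.

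\textbf{Step 2: compactness.} For each $n$, let $X_n$ be the set of isomorphisms $\widetilde G_1/U_n \to \widetilde G_2/U_n$. By assumption $X_n$ is finite and nonempty. The obstacle is that an isomorphism at level $n+1$ need not carry $U_n/U_{n+1}$ to $U_n/U_{n+1}$, so the $X_n$ do not automatically form an inverse system.

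To fix this, we enlarge the objects. Let $Y_n$ be the set of isomorphisms $\widetilde G_1/V \to \widetilde G_2/W$ where $V, W \subseteq \widetilde G$ are open normal subgroups of $\widetilde G_1$, $\widetilde G_2$ respectively, of some fixed index $d_n$ and contained in $U_{n'}$ for suitable $n'$. There are only finitely many such $V, W$, so $Y_n$ is finite.

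A cleaner route is to define $V_n^{(i)}$ as the intersection of all open normal subgroups of $\widetilde G_i$ of index at most $n$. These are characteristic in $\widetilde G_i$, so any isomorphism $\widetilde G_1/K \to \widetilde G_2/K'$ with $K, K'$ small enough carries $V_n^{(1)}/K$ onto $V_n^{(2)}/K'$, since the latter is defined intrinsically through the quotient. Take $Z_n$ to be the finite set of isomorphisms $\widetilde G_1/V_n^{(1)} \to \widetilde G_2/V_n^{(2)}$. Reduction gives maps $Z_{n+1} \to Z_n$.

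\textbf{Step 3: nonemptiness of $Z_n$.} For a given $n$, choose $N = U_m$ with $m$ large enough that $U_m \subseteq V_n^{(1)} \cap V_n^{(2)}$. By assumption there is an isomorphism $\alpha\colon \widetilde G_1/U_m \to \widetilde G_2/U_m$. Open normal subgroups of $\widetilde G_i$ of index at most $n$ all contain $U_m$, so they correspond to normal subgroups of index at most $n$ in $\widetilde G_i/U_m$. Hence $\alpha$ maps $V_n^{(1)}/U_m$ onto $V_n^{(2)}/U_m$ and induces an element of $Z_n$.

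The inverse limit of a sequence of nonempty finite sets is nonempty. A compatible family in $\varprojlim Z_n$ gives a continuous isomorphism $\widetilde G_1 = \varprojlim \widetilde G_1/V_n^{(1)} \to \varprojlim \widetilde G_2/V_n^{(2)} = \widetilde G_2$; the $V_n^{(i)}$ are cofinal because each $\widetilde G_i$ is profinite. This contradicts the hypothesis.

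So some $U_m$ yields non-isomorphic preimages, and we take $H = \widetilde G/U_m$ with $\psi$ the quotient map. The main care point is Step 2: we must use characteristic subgroups so that the finite sets of isomorphisms form an inverse system.
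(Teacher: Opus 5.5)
Your proof is correct, but it takes a different route from the paper's.

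\textbf{The paper's route.} The paper argues directly. By \cite[Theorem 4.2.4]{Wilson}, two profinite groups with finitely many open subgroups of each index are isomorphic as soon as they have the same finite continuous quotients. So there is a finite group $H_0$ that is (say) a quotient of $\widetilde G_1$ but not of $\widetilde G_2$. The paper then chooses $\psi$ with $\ker\psi$ inside $\ker\varphi$ and inside the kernel of $\widetilde G_1\to H_0$. Then $\psi(\widetilde G_1)$ still maps onto $H_0$, while $\psi(\widetilde G_2)$, being a quotient of $\widetilde G_2$, cannot.

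\textbf{Your route.} You prove the needed rigidity by hand. The intrinsically defined subgroups $V_n^{(i)}$ turn the finite sets $Z_n$ into an inverse system, and the inverse limit of nonempty finite sets gives a continuous isomorphism. This is in effect the standard proof of the cited theorem, inlined. Your contrapositive hypothesis (that $\widetilde G_1/N\cong\widetilde G_2/N$ for the same $N$) makes nonemptiness of $Z_n$ immediate. So you never need the order-comparison step that the general ``same finite quotients'' statement requires.

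\textbf{What each buys.} The paper's version is shorter and exhibits an explicit finite witness $H_0$. Yours is self-contained and gives an equally robust conclusion. Your argument shows some $Z_n$ is empty, and then every $\psi$ with $\ker\psi\subseteq V_n^{(1)}\cap V_n^{(2)}\cap\ker\varphi$ works.

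\textbf{Cosmetic points.} The abandoned $Y_n$ construction in Step 2 should simply be deleted. The existence of $m$ with $U_m\subseteq V_n^{(1)}\cap V_n^{(2)}$ deserves its one-line justification: each $V_n^{(i)}$ is open of some finite index $k$ in $\widetilde G$, hence contains the intersection of all open subgroups of $\widetilde G$ of index at most $k$.
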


\begin{proof}
Our assumption implies that $\varphi^{-1}(G_1)$ and $\varphi^{-1}(G_2)$ have only finitely many open subgroups of any given index, so without loss of generality, them not being isomorphic is witnessed by a finite group $H_0$ which is a continuous quotient of $\varphi^{-1}(G_1)$ but not of $\varphi^{-1}(G_2)$ in view of \cite[Theorem 4.2.4]{Wilson}. We can thus find a finite group $H$ and a continuous surjective group homomorphism $\psi \colon \widetilde G \to H$ such that the quotient map from $\varphi^{-1}(G_1)$ to $H_0$ factors via the restriction of $\psi$ to $\varphi^{-1}(G_1)$, and $\varphi$ factors through $\psi$. It follows that $\psi(\varphi^{-1}(G_1))$ is not isomorphic to $\psi(\varphi^{-1}( G_2))$ because $H_0$ is a quotient of the first but not of the second.
\end{proof}

\begin{proof}[Proof of Proposition \ref{CorProfComp}]

We apply Proposition \ref{SmallGroupProp} with $\widetilde G$ the profinite completion of $\Gamma$, recalling that a finitely generated group has only finitely many subgroups of any given index, and that the closure of the image in $\widetilde G$ of a finite index subgroup $\Delta$ of $\Gamma$ is isomorphic to the profinite completion of $\Delta$.
\end{proof}

\section{Proving our Main Result}

We recall some aspects of Cornulier's construction as recast by Sambale in \cite{group-theoretic-cornulier}.

\begin{thm} \label{thm:explicit-cornulier}
Let $G$ be a finite group of order $n \ge 2$. Let $p > n$ be a prime number. Let $\widetilde P$ be the free group with $n$ generators, nilpotency class $n$, and exponent $p$. Denote the generators of $\widetilde P$ by $\{v_g\}_{g \in G}$, and note that $\widetilde P$ is finite. Let $Q = (\mathbb{F}_p^\times)^{|G|}$, with elements written as $a = (a_g)_{g \in G}$ for $a_g \in \mathbb{F}_p^\times$. Consider the action of $Q$ on $\widetilde P$ defined by $a(v_g) = v_g^{a_g}$, and the action of $G$ on $\widetilde P \rtimes Q$ defined by $h(v_g) = v_{hg}$ and $h\left((a_g)_{g \in G}\right) = (a_{h^{-1}g})_{g \in G}$. Then there is a subgroup $K \le [\widetilde P, \widetilde P]$ normalized by $\widetilde P$, $Q$ and $G$, such that for $P = \widetilde P / K$ and $N = P \rtimes Q$, the composition $G \to \operatorname{Aut}(N) \to \operatorname{Out}(N)$ is an isomorphism.
\end{thm}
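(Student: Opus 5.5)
Since the paper recalls this as a recasting of Cornulier's construction, I would follow Sambale's approach and build the kernel $K$ explicitly. The aim is to choose $K$ so that every automorphism of $N = P\rtimes Q$ becomes, modulo inner automorphisms, a permutation of the generators $v_g$ that preserves a rigid ``$G$-shaped'' structure.

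First I will check that $G$ acts on $\widetilde P\rtimes Q$ by automorphisms. Permuting the free generators of a relatively free group gives an automorphism of $\widetilde P$, and the formula on $Q$ is compatible with $a(v_g)=v_g^{a_g}$. Next I define $K$ as follows. Fix a generating set $S=\{s_1,\dots,s_k\}$ of $G$, $k<n$. Consider the degree-$(i+1)$ ``labelled'' commutators $c_{g,i}=[v_g,v_{gs_i},v_g,\dots,v_g]$, using $i$ extra copies of $v_g$ so that the multidegree records $i$. Let $K$ be the subgroup generated by all basic commutators of weight $\ge 2$ in the Hall basis except the $c_{g,i}$ (and their images under $Q$, which are scalar multiples), together with the lower central term $\gamma_{n}$ beyond what is needed.

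Because $Q$ acts diagonally on multihomogeneous components, $K$ is $Q$-invariant. Because the set of $c_{g,i}$ is permuted by left multiplication by $h$, $K$ is $G$-invariant. Taking $K$ to contain $\gamma_3$-terms appropriately (or making it a sum of homogeneous components) makes it normal in $\widetilde P$. The hypothesis $p>n$ makes the Lazard/Lie-ring correspondence available, so these homogeneity arguments can be done in the free Lie algebra over $\mathbb F_p$.

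The second step is to show that the map $G\to\operatorname{Out}(N)$ is injective. Suppose $h\ne1$ acts innerly. Then conjugation by some $x$ must move $v_g$ to $v_{hg}$ modulo $[P,P]$. But $Q$ acts on $P/[P,P]$ by distinct characters on the lines $\langle v_g\rangle$. Since $p-1\ge n$, these characters are distinct, and inner automorphisms act trivially on $P/[P,P]$ up to the $Q$-part, which only rescales each line. Hence no inner automorphism can permute the lines, which is a contradiction.

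The third step, surjectivity, is where I expect the main obstacle. Given $\alpha\in\operatorname{Aut}(N)$, Sylow/Hall theory lets me compose with an inner automorphism so that $\alpha(Q)=Q$, since $Q$ is a $p'$-Hall complement to the normal Sylow subgroup $P$. Then $\alpha$ permutes the $Q$-eigenlines of $P/\Phi(P)$, so $\alpha(v_g)\equiv v_{\sigma(g)}^{\lambda_g}$ for some permutation $\sigma$ and scalars $\lambda_g$. The hard point is to show that $\sigma$ is left multiplication by some $h$. This is done by showing that $\sigma$ preserves the surviving commutators: the nonvanishing of $[v_a,v_b,v_a,\dots]$ with exactly $i$ repetitions in $P$ occurs iff $b=as_i$. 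So $\sigma(as_i)=\sigma(a)s_i$ for all $a$ and $i$, which forces $\sigma(a)=\sigma(1)a$. Checking this requires verifying that no unexpected relations make other commutators survive, and that the $\lambda_g$ together with the higher-degree corrections can be absorbed by $Q$ and by inner automorphisms from $P$. This last absorption, a cohomological vanishing using $\gcd(p,|Q|)=1$, is what I would try to settle by averaging over $Q$. After composing with the automorphism induced by $h^{-1}$, we are left with an automorphism that fixes $Q$ and each line. It remains to show that this automorphism is inner, which finishes the proof.
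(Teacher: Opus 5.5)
\textbf{There is a genuine gap, and it is in the construction of $K$.} Your reduction strategy is the right one: conjugate so that $\alpha(Q)=Q$, read off a permutation $\sigma$ of the $Q$-eigenlines, and show that the permutations compatible with $K$ are exactly the left translations. But $K$ fails as written. Each weight-two commutator $[v_g,v_{gs_i}]$ is, up to inversion, a basic commutator that is not one of your $c_{g,i}$, so it lies in $K$. Since $K$ must be normalized by $\widetilde P$, $K$ then also contains $c_{g,i}=[[v_g,v_{gs_i}],v_g,\dots,v_g]$. In fact $K$ becomes all of $[\widetilde P,\widetilde P]$, every permutation of the $v_g$ induces an automorphism of $N$, and $\operatorname{Out}(N)$ is the full symmetric group on $G$. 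To keep $c_{g,i}$ alive you must keep all its initial segments, i.e.\ every multidegree $je_g+e_{gs_i}$ with $1\le j\le i+1$. Then $[v_a,v_b,v_a]$ survives whenever $a^{-1}b\in S$, so the label must be read from the \emph{maximal} number of repetitions, not from ``exactly $i$''. There are two further problems. The Hall basis depends on an ordering of the generators and is not permuted by $G$, so ``the $c_{g,i}$ are permuted, hence $K$ is $G$-invariant'' does not follow. And $c_{g,i}$ has weight $i+2$ on your description while $\widetilde P$ has class $n$, so you need $k\le n-2$ rather than $k<n$, and $n=2$ needs separate treatment.

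Your parenthetical ``sum of homogeneous components'' points to the fix. Via the Lazard correspondence ($p>n$), let $K$ correspond to the sum of all multihomogeneous components of degree $\ge 2$ whose multidegree is not $\le (i+1)e_g+e_{gs_i}$ for any $g,i$. This is an ideal, stable under $Q$ and $G$, and its surviving components are one-dimensional. A permutation preserving the surviving multidegrees preserves the coloured Cayley digraph of $(G,S)$, hence is a left translation. The paper, via Sambale, avoids all this by letting $K$ be generated by the $G$-translates of the single weight-$n$ commutator $[v_{g_1},\dots,v_{g_{n-1}},v_{g_1}]$. These are central in $\widetilde P$, so normality and $Q$- and $G$-invariance are automatic, and the commutator encodes an enumeration of $G$. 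The case $|G|=2$ is handled with $K=[\widetilde P,\widetilde P]$.

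\textbf{The surjectivity step is also unfinished.} It ends by asserting what must be proved (``it remains to show that this automorphism is inner''). The mechanism you propose, absorbing higher-degree corrections by cohomological vanishing or averaging over $Q$, misplaces where $p>n$ enters; in fact there are no corrections. Once $\alpha(Q)=Q$, we have $q\,\alpha(v_g)\,q^{-1}=\alpha(v_g)^{q_{\sigma(g)}}$ for all $q\in Q$. On $\gamma_k(P)/\gamma_{k+1}(P)$, $Q$ acts diagonally with characters $a\mapsto\prod_h a_h^{d_h}$, where $\sum_h d_h=k\le n\le p-1$. For $k\ge 2$ none of these equals $a\mapsto a_{\sigma(g)}$; this is the same weight argument as in the paper's proof that $Z(N)=1$. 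Inducting along the lower central series gives $\alpha(v_g)=v_{\sigma(g)}^{\lambda_g}$ exactly. Hence $\alpha$ is conjugation by an element of $Q$ composed with the automorphism of $\widetilde P\rtimes Q$ induced by $\sigma$. Since $\widetilde P$ is relatively free and $K$ is $Q$-stable, $\sigma$ must preserve $K$. This yields $\operatorname{Out}(N)\cong\operatorname{Stab}_{\operatorname{Sym}(G)}(K)$ and is what justifies your ``$\sigma$ preserves the surviving commutators'' step. With only the congruence modulo $\Phi(P)$, you would instead have to pass to the associated graded Lie ring and check that $K$ is compatible with the grading.
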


\begin{proof}
For $|G| \ge 3$, this is the construction given in \cite[Theorem 8]{group-theoretic-cornulier}, which uses the quotient by $K = \langle[v_{hg_1}, v_{hg_2}, \dots, v_{hg_{n - 1}}, v_{hg_1}] \mid h \in G\rangle$, where $g_1, \ldots, g_n$ is some enumeration of the elements of $G$. For $|G| = 2$, we take $K = [\widetilde{P}, \widetilde{P}]$, and the result follows from \cite[Lemma 5]{group-theoretic-cornulier}. 
\end{proof}

Analyzing this construction, we can show that $N$ satisfies several special properties.

\begin{thm}\label{thm:cornulier-with-properties}
Let $G$ be a finite group. Then there exists a finite group $N$ such that
\begin{enumerate}
\item $G \cong \operatorname{Out}(N)$;
\item $Z(N) = 1$;
\item The only subgroup of $\operatorname{Aut}(N)$ isomorphic to $N$ is $\operatorname{Inn}(N)$;
\item There is a section $\operatorname{Out}(N) \to \operatorname{Aut}(N)$;
\item $N$ is supersolvable.
\end{enumerate}
\end{thm}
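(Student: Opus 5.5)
We take $N = P \rtimes Q$ from \cref{thm:explicit-cornulier} (for $|G|=1$ we may take $N$ any centerless complete supersolvable group, e.g. $N=S_3$). Then (1) holds by that theorem. Property (4) is immediate from the same theorem: $G$ acts on $N$ by automorphisms, and $G \to \operatorname{Aut}(N) \to \operatorname{Out}(N)$ is an isomorphism, so $\operatorname{Out}(N)\to\operatorname{Aut}(N)$ given by inverting and composing with the action is a section.

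For (5), we use a normal series of $N$. $P$ is a $p$-group and $Q$ is abelian, so it suffices to refine the lower central series of $P$ into $Q$-invariant steps of order $p$. Each factor $\gamma_i(P)/\gamma_{i+1}(P)$ is spanned by images of commutators $[v_{g_1},\dots,v_{g_i}]$. Since $Q$ acts diagonally on the generators, it acts on such a commutator modulo $\gamma_{i+1}$ by the character $a\mapsto \prod_j a_{g_j}$. Hence each factor is an $\mathbb{F}_p Q$-module that is a sum of one-dimensional characters. This works because $|Q|$ is prime to $p$ and $Q$ is abelian split over $\mathbb{F}_p$, so the module is semisimple with one-dimensional simple summands. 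Thus there are $N$-normal subgroups of index $p$ steps in $P$ (normality under $P$ holds since $P$ acts trivially on lower central factors). Then $Q$ is abelian with cyclic-of-order-$(p-1)$ factors, which we refine into prime-order steps. So $N$ is supersolvable.

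For (2), take $z\in Z(N)$ and write $z=xa$ with $x\in P$, $a\in Q$. Its image in $N/P\cong Q$ must commute with the action on $P/\Phi(P)=\bigoplus \mathbb{F}_p v_g$. Since $za$ centralizes $P$, the element $a$ acts on $P/[P,P]$ as conjugation by $x^{-1}$, which is trivial there. Hence $a_g=1$ for all $g$, so $a=1$. Then $z=x\in P$ commutes with $Q$. Take $a$ with all $a_g$ equal to a generator $\lambda$ of $\mathbb{F}_p^\times$. Then $Q$ acts on the factor $\gamma_i/\gamma_{i+1}$ by $\lambda^i$, which is nontrivial because $i\le n<p-1$ when $p>n+1$. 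If only $p>n$ is available, we can choose other scalings such as $a_g=\lambda$ for a single $g$. Hence $C_P(Q)=1$ by a coprime action argument (fixed points on $P$ map onto fixed points on each factor). So $z=1$.

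For (3), which we expect to be the main obstacle, let $M\le\operatorname{Aut}(N)$ with $M\cong N$. Since $Z(N)=1$ we have $\operatorname{Inn}(N)\cong N$, and $\operatorname{Aut}(N)=\operatorname{Inn}(N)\rtimes G$ by (4). By order, $|M|=|\operatorname{Inn}(N)|$.

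We first compare $p$-parts. The Sylow $p$-subgroup of $\operatorname{Aut}(N)$ contains $\operatorname{Inn}(P)$ together with a Sylow $p$-subgroup of $G$. However, $|P|_p$ is the full $p$-part of $N$, and $p\nmid|G|$ because $p>n$. Hence $O_p$ and every $p$-subgroup of $\operatorname{Aut}(N)$ of order $|P|$ lies in $\operatorname{Inn}(P)$, which is the unique Sylow $p$-subgroup since it is normal. So $M\supseteq \operatorname{Inn}(P)$, because $M$ has a normal Sylow $p$ of that order.

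It remains to show $M/\operatorname{Inn}(P)\le \operatorname{Inn}(N)/\operatorname{Inn}(P)\cong Q$ inside $\operatorname{Aut}(N)/\operatorname{Inn}(P)\cong Q\rtimes G$. The quotient $M/\operatorname{Inn}(P)\cong Q$ is abelian of exponent $p-1$ and rank $n$. Moreover, $M$ must act on $\operatorname{Inn}(P)/\Phi\cong\mathbb{F}_p^n$ as $Q$ does, namely faithfully and diagonalizably with the $n$ distinct coordinate characters. In $Q\rtimes G$, with $G$ acting by permuting coordinates, an abelian subgroup of order $(p-1)^n$ and exponent $p-1$ acting diagonalizably with these eigen-characters must be $Q$ itself. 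Indeed, any element with nontrivial $G$-component permutes the eigenlines $v_g$ nontrivially. It then cannot commute with enough of $Q$-type elements to form a rank-$n$ abelian group of that order. Making this counting precise is the delicate step. I would try it by showing that the centralizer in $Q\rtimes G$ of any element outside $Q$ has order less than $(p-1)^n$. Hence $M=\operatorname{Inn}(N)$.
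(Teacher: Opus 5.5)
Your arguments for (1), (2), (4), (5) and the first half of (3) are sound and agree with the paper. For (5) you give an explicit version of the paper's remark that an extension of a $p$-group by an abelian group of exponent $p-1$ is supersolvable. In (2) you remove the $Q$-component of a central element through its trivial action on $P^{\mathrm{ab}}$, where the paper instead notes that $P^{\mathrm{ab}}\rtimes Q\cong(\mathbb{F}_p\rtimes\mathbb{F}_p^\times)^{n}$ is centerless; after that, both use the scalar element of $Q$ acting by $\lambda^k$ on $\gamma_k(P)/\gamma_{k+1}(P)$. Your reduction of (3) to the claim that $Q$ is the only subgroup of $Q\rtimes G\cong C_{p-1}\wr G$ isomorphic to $Q$ is exactly the paper's.

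The gap is that this claim, which is the real content of (3), is not proved, and the heuristic you give for it is not valid. An abstract isomorphism $M\cong N$ sends $P$ to $\operatorname{Inn}(P)$, but it induces an unknown linear automorphism of $P/\Phi(P)$. Nothing forces the eigenlines of $M/\operatorname{Inn}(P)$ to be the coordinate lines $\mathbb{F}_p v_g$. Concretely, take $n=2$ and $p=3$, which $p>n$ permits, with $G=\{1,h\}$ and $P=\mathbb{F}_3^2$. Then $V=\langle(-1,-1),h\rangle\le Q\rtimes G\cong C_2\wr C_2$ is a Klein four-group different from $Q$. It acts on $P$ faithfully and diagonalizably, with distinct characters on the lines spanned by $v_1\pm v_h$. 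Consequently $P\rtimes V\cong S_3\times S_3=N$, and (3) actually fails for this choice. So you must fix $p\ge n+2$ from the start, as the paper does; this also makes your hedge in (2) unnecessary.

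With that choice, the centralizer bound you propose at the end works and is short. Let $x=bh$ with $b\in Q$ and $1\ne h\in G$ of order $m$.
\begin{enumerate}
\item Any $ck\in C_{Q\rtimes G}(x)$ has $k\in C_G(h)$.
\item $C_{Q\rtimes G}(x)\cap Q$ consists of the $h$-invariant functions $G\to\mathbb{F}_p^\times$, a group of order $(p-1)^{n/m}\le(p-1)^{n/2}$.
\item Hence $|C_{Q\rtimes G}(x)|\le n(p-1)^{n/2}<(p-1)^n$, because $(p-1)^{n/2}\ge p-1>n$.
\end{enumerate}
An abelian subgroup of order $(p-1)^n$ therefore contains no element outside $Q$, so it equals $Q$.

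This is a genuinely different route from the paper's. The paper observes that all $|G|$-th powers in $Q\rtimes G$ lie in $Q$. The $|G|$-th powers of $H\cong Q$ then form a subgroup of $Q$ isomorphic to $Q^{|G|}$, and hence equal to it. So $H$ contains a nontrivial element supported on a single coordinate, and that element's centralizer is exactly $Q$. Your counting is more direct and proves more: $Q$ is the unique abelian subgroup of its order, without using its exponent or rank. Both routes need $p-1>n$; the paper needs it so that $|G|$-th powers in $\mathbb{F}_p^\times$ are nontrivial. With this computation inserted and $p\ge n+2$ fixed, your proof is complete.
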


\begin{rmk}
We only need properties (1)--(3) for the proof of \cref{MainRes}.
\end{rmk}

\begin{proof}
We may assume $|G| \ge 2$. Let $N = P \rtimes Q$ be the group constructed in \cref{thm:explicit-cornulier} for $G$ and any prime number $p \ge |G| + 2$. Then property (1) holds by \cref{thm:explicit-cornulier}, property (5) holds as $N$ is an extension of a $p$--group by an abelian group of exponent $p - 1$, and property (4) holds since the isomorphism $G \to \operatorname{Out}(N)$ was defined by the composition $G \to \operatorname{Aut}(N) \to \operatorname{Out}(N)$. It remains to show properties (2) and (3).
    
We start with property (2). We have $P^\text{ab} \rtimes Q \cong (\mathbb{F}_p \rtimes \mathbb{F}_p^\times)^{|G|}$, a group with trivial center. Thus $Z(N) \subseteq [P, P]$. Let $z \in [P, P]$ be an element that is central in $N$. Assume, for the sake of contradiction, that $z$ is nontrivial. Let $k \le |G|$ be the unique integer such that $z$ is a nontrivial element of $\gamma_k(P) \setminus \gamma_{k+1}(P)$, where $\{\gamma_i(P)\}$ is the lower central series of $P$. Recall that $\gamma_k(P)/\gamma_{k+1}(P)$ is a vector space over $\mathbb{F}_p$, generated by the commutators of weight $k$ in the generators of $P$, and that the weight $k$ commutator operation defines a linear map of $\mathbb{F}_p$--vector spaces
\[ (P^\text{ab})^{\otimes k} \to \gamma_k(P)/\gamma_{k+1}(P). \]
Let $\zeta \in \mathbb{F}_p^\times$ be a multiplicative generator, and let $a \in Q$ be the element whose action on $P$ maps the generators of $P$ to their $\zeta$-th powers. Since $z$ is central in $N$, it follows that $a(z) = z$. However, by the multilinearity of the commutator operation, the action of $a$ on $\gamma_k(P)/\gamma_{k+1}(P)$ multiplies each weight $k$ commutator by $\zeta^k$, so $a(z) \equiv \zeta^k z \pmod {\gamma_{k+1}(P)}$. Since $1 \le k \le p - 2$, we have $\zeta^k \ne 1$, so $z$ must belong to $\gamma_{k+1}(P)$, a contradiction. Thus, we have shown $Z(N) = 1$.

We now show property (3). Let $\widetilde G = \operatorname{Aut}(N)$, so that we have a short exact sequence
\[1 \to N \to \widetilde G \to G \to 1. \]
We need to show that $\widetilde G$ has a unique subgroup isomorphic to $N$. Since $P$ is a normal $p$--Sylow subgroup of $\widetilde G$, it suffices to show that $\widetilde G/P \cong Q \rtimes G$ (which is the regular wreath product $C_{p-1} \wr G$) has a unique subgroup isomorphic to $Q$. Note that every $|G|$th power in $Q \rtimes G$ lies in the base group $Q$. If $H \le Q \rtimes G$ is a subgroup isomorphic to $Q$, then the set of $|G|$th powers of elements of $H$ forms a subgroup $H_0 \le Q$, and is isomorphic to the maximal subgroup of $C_{p-1}^{|G|}$ of exponent $\frac{p - 1}{\gcd(p - 1, |G|)}$. However, the only subgroup of $Q$ with this isomorphism type is the subgroup of $|G|$th powers in $Q$. Thus, $H$ contains all $|G|$th powers of elements in $Q$. Since $|G| < p - 1$, it follows that $H$ contains an element of $Q$ which is supported on a single coordinate. The centralizer of this element in $Q \rtimes G$ is equal to $Q$, but must contain $H$. This implies $H = Q$ and proves property (3).
\end{proof}

\begin{cor}\label{cor:extension-pullback-retraction}
Let $G$ be a finite group. Then there exists a finite group $\widetilde G$ and a surjective group homomorphism $\widetilde G \to G$ such that, denoting by $N$ its kernel, for any injection of $G$ into a finite group $S$, the following are equivalent:
\begin{enumerate}
\item The short exact sequence 
\[1 \to N \to \widetilde G \to G \to 1\]
is the pullback of some short exact sequence
\[1 \to N \to \widetilde S \to S \to 1\]
along the injection $G \to S$.
\item There is a homomorphism $r\colon S \to G$ that is a retraction of $G \to S$.
\end{enumerate}
Furthermore, the only subgroup of $\widetilde G$ that is isomorphic to $N$ is $N$ itself.
\end{cor}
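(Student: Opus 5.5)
Take $N$ from \cref{thm:cornulier-with-properties}, put $\widetilde G = \operatorname{Aut}(N)$, and let $\widetilde G \to G$ be the composite $\operatorname{Aut}(N) \to \operatorname{Out}(N) \cong G$. Since $Z(N)=1$, the kernel is $\operatorname{Inn}(N) \cong N$. The final assertion, that $N$ is the only subgroup of $\widetilde G$ isomorphic to $N$, is exactly property (3). The main tool is the standard theory of extensions with centerless kernel. For a group $N$ with $Z(N)=1$, extensions $1 \to N \to E \to S \to 1$ correspond bijectively, up to equivalence, to homomorphisms $\psi\colon S \to \operatorname{Out}(N)$. In one direction, $E$ gives its outer action. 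In the other, $E$ is recovered as the pullback $\operatorname{Aut}(N)\times_{\operatorname{Out}(N)} S$; the map $E \to \operatorname{Aut}(N)$ is conjugation. Moreover, pulling back along a map $S' \to S$ corresponds to precomposing $\psi$ with it. Our extension $\widetilde G$ corresponds to the isomorphism $\psi_0\colon G \to \operatorname{Out}(N)$.

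For (2)$\Rightarrow$(1): given a retraction $r\colon S \to G$, let $\widetilde S$ be the pullback of $\widetilde G \to G$ along $r$. This is the extension of $S$ by $N$ with outer action $\psi_0\circ r$. Its pullback along the injection $\iota\colon G\to S$ is then the pullback of $\widetilde G$ along $r\circ\iota=\mathrm{id}_G$, namely $\widetilde G$ itself. This direction is essentially formal.

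For (1)$\Rightarrow$(2): suppose $1\to N\to\widetilde S\to S\to 1$ pulls back to $\widetilde G$ along $\iota$. Its conjugation action on the normal subgroup $N$ induces $\psi\colon S\to\operatorname{Out}(N)$. Because pullback is compatible with outer actions, $\psi\circ\iota$ equals the outer action of $\widetilde G$ on $N$. Here one must check that the identification of kernels in the pullback is the given one, so that this outer action is $\psi_0$ up to the identification. Then $r=\psi_0^{-1}\circ\psi\colon S\to G$ satisfies $r\circ\iota=\mathrm{id}_G$, so $r$ is the desired retraction. One subtlety is what "the pullback" means precisely: the intended meaning should be that $\widetilde G$ is isomorphic to $\iota^*\widetilde S$ as an extension. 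If only an abstract group isomorphism is given, property (3) is what saves the argument. Any isomorphism $\widetilde G\cong\iota^*\widetilde S$ must carry $N$ onto $N$, since $N$ is the unique subgroup isomorphic to $N$. It therefore induces an isomorphism of extensions up to automorphisms of $N$ and of $G$. These twist $\psi_0$ by an outer automorphism of $N$ (acting by conjugation on $\operatorname{Out}(N)$) and by an automorphism $\alpha$ of $G$, and $r$ can be adjusted accordingly, for example to $\alpha^{-1}\circ r$ composed with the appropriate conjugation.

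I expect the main obstacle to be this bookkeeping: correctly tracking the identification of the kernel with $N$ and of the quotient with $G$, so that $\psi\circ\iota$ really equals $\psi_0$ and not a twist of it. The retraction claim is invariant under such twists, but only once one checks that each twist can be absorbed into $r$.
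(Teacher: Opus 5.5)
Your proposal is correct and matches the paper's proof. Both take $\widetilde G=\operatorname{Aut}(N)$, obtain (1)$\Rightarrow$(2) from the commuting square of outer actions $G\to\operatorname{Out}(N)$, $S\to\operatorname{Out}(N)$ whose top arrow is an isomorphism, obtain (2)$\Rightarrow$(1) from the fiber product $\widetilde G\times_G S$ formed along $r$, and read off the final assertion from property (3). Your extra bookkeeping about abstract isomorphisms versus isomorphisms of extensions is sound but not needed, since in the paper, and in its use in the proof of Theorem~\ref{MainRes}, the pullback is literal.
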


\begin{rmk}
The implication (2) $\implies$ (1) holds for any surjective group homomorphism $\widetilde G \to G$ as we will see below in the proof.
\end{rmk}

\begin{proof}
We take $N$ as in \cref{thm:cornulier-with-properties} and let $\widetilde G = \operatorname{Aut}(N)$. Suppose that (1) holds, so we have a pullback diagram
\[ \xymatrix{ 1 \ar[r] & N \ar[r] \ar@{=}[d] & \widetilde G \ar[r] \ar[d] & G \ar[r] \ar[d] & 1 \\ 1 \ar[r] & N \ar[r] & \widetilde S \ar[r] & S \ar[r] & 1.}\] 
Each row of the diagram induces a homomorphism from its right-most group to $\operatorname{Out}(N)$,
so we have a commutative diagram
\[ \xymatrix{ G \ar[r] \ar[d] & \operatorname{Out}(N) \ar@{=}[d] \\ S \ar[r] & \operatorname{Out}(N). } \]
However, the top horizontal map is an isomorphism, so this diagram gives a retraction $S \to G$. Thus, we have shown that (1) implies (2). 

Now, suppose that (2) holds, so we have a retraction $r\colon S \to G$. Taking $\widetilde S$ to be the fiber product $\widetilde G \times_{G} S$ shows that (1) holds. The last part of the theorem is one of the properties of $N$ in \cref{thm:cornulier-with-properties}.
\end{proof}

\begin{proof}[Proof of \cref{MainRes}]
We take $\widetilde G$ as in Corollary \ref{cor:extension-pullback-retraction} with kernel $N$. We assume that $G_1, G_2 \le G$ are subgroups whose preimages $\widetilde G_1, \widetilde G_2 \le \widetilde G$ are isomorphic (by an isomorphism $\psi$). Let us show that this implies that $G_1, G_2$ are conjugate.
    
To that end, we build a finite group $S$ into which $G$ injects and apply Corollary \ref{cor:extension-pullback-retraction}. To achieve this, let $\Delta$ denote the (infinite) group obtained from $\widetilde G$ via an HNN extension using the isomorphism $\psi$. By construction, $\widetilde G$ injects into $\Delta$, and $\widetilde G_1$ and $\widetilde G_2$ are conjugate in $\Delta$. Now, $\Delta$ is a virtually free group and hence residually finite. Thus we obtain an injection $\widetilde G \hookrightarrow \widetilde S$, with $\widetilde S$ a finite quotient of $\Delta$.
    
Let $\widetilde \sigma \in \widetilde S$ be an element conjugating $\widetilde G_1$ to $\widetilde G_2$. Without loss of generality, we may assume that $\widetilde S = \langle\widetilde G, \widetilde \sigma\rangle$. Since $N^{\widetilde \sigma}$ is a subgroup of $\widetilde G$ isomorphic to $N$, we must have $N^{\widetilde \sigma} = N$. Thus $N \triangleleft \widetilde S$. Let $S = \widetilde S/N$ and denote the image of $\widetilde \sigma$ in $S$ by $\sigma$. Then $G$ injects into $S$, and $\sigma$ conjugates $G_1$ to $G_2$ in $S$. Since the extension $\widetilde G \to G$ is the pullback of $\widetilde S \to S$, it follows by Corollary \ref{cor:extension-pullback-retraction} that there is a retraction $r \colon S \to G$. Thus $r(\sigma)$ conjugates $G_1$ to $G_2$ in $G$ as required. 
\end{proof}

\section{Proof of \cref{thm:main:infinite}}

Let $S$ be a nonabelian simple group.
For $s\in S$ let $c_s \colon S \to S$ be the inner automorphism $c_s(x) = sxs^{-1}$ for $x \in S$.
The map $s \mapsto c_s$ is a group homomorphism from $S$ to $\mathrm{Aut}(S)$ which is injective since the center of $S$ is trivial, giving us an identification of $S$ with $\mathrm{Inn}(S)$.

\begin{lem} \label{TrivCentInn}
The subgroup $\mathrm{Inn}(S)$ of $\mathrm{Aut}(S)$ is normal and its centralizer is trivial.
\end{lem}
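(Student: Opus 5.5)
The lemma has two parts, normality and the triviality of the centralizer, and I will verify each directly from the definitions, using only that $S$ is nonabelian simple.

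\emph{Normality.} For $\alpha \in \mathrm{Aut}(S)$ and $s \in S$, compute $\alpha c_s \alpha^{-1}$ on an arbitrary $x \in S$:
\[ \alpha c_s \alpha^{-1}(x) = \alpha\bigl(s\,\alpha^{-1}(x)\,s^{-1}\bigr) = \alpha(s)\,x\,\alpha(s)^{-1} = c_{\alpha(s)}(x). \]
Hence $\alpha c_s \alpha^{-1} = c_{\alpha(s)} \in \mathrm{Inn}(S)$, so $\mathrm{Inn}(S)$ is normal in $\mathrm{Aut}(S)$.

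\emph{Trivial centralizer.} Suppose $\alpha \in \mathrm{Aut}(S)$ commutes with every $c_s$. By the identity above, $c_{\alpha(s)} = c_s$ for all $s \in S$. The map $s \mapsto c_s$ is injective because $Z(S)=1$; this holds since $S$ is simple and nonabelian, so the normal subgroup $Z(S)$ is proper and hence trivial. Injectivity gives $\alpha(s) = s$ for all $s$, so $\alpha = \mathrm{id}$.

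There is no real obstacle here. The only point needing care is the triviality of $Z(S)$, which is where the hypothesis that $S$ is nonabelian simple enters; this was already noted just before the lemma.
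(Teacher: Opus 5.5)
Your proposal is correct and follows essentially the same argument as the paper: the identity $\alpha c_s \alpha^{-1} = c_{\alpha(s)}$ gives normality, and the injectivity of $s \mapsto c_s$ (from $Z(S)=1$) turns $c_{\alpha(s)} = c_s$ into $\alpha = \mathrm{id}_S$. You also spell out the verification of the identity and the normality conclusion, which the paper leaves implicit.
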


\begin{proof}
For every $a\in \mathrm{Aut}(S)$ and $s\in S$ we have $a c_s a^{-1} = c_{a(s)}$. Therefore if $a$ lies in the centralizer of $\mathrm{Inn}(S)$ we have $c_{a(s)}=c_s$ for every $s \in S$.  The aforementioned injectivity implies that $a(s) = s$ for every $s\in S$, so $a=\mathrm{id}_S$ as required.
\end{proof}

\begin{lem}\label{lem:centralizer-and-minimal-normal}
Let $P$ be a subgroup of $\mathrm{Aut}(S)$ that contains $\mathrm{Inn}(S)$. Then every nontrivial normal subgroup $N$ of $P$ contains $\mathrm{Inn}(S)$. Consequently $\mathrm{Inn}(S)$ is the unique minimal nontrivial normal subgroup of $P$.
\end{lem}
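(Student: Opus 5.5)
The plan is the standard argument for almost simple groups, built on \cref{TrivCentInn}. Let $N \trianglelefteq P$ be nontrivial, where $\mathrm{Inn}(S) \le P \le \mathrm{Aut}(S)$. Since $\mathrm{Inn}(S) \trianglelefteq \mathrm{Aut}(S)$ by \cref{TrivCentInn}, it is also normal in $P$. Hence $N \cap \mathrm{Inn}(S)$ is a normal subgroup of $P$ contained in $\mathrm{Inn}(S)$. In particular it is normalized by $\mathrm{Inn}(S)$, so it is a normal subgroup of $\mathrm{Inn}(S) \cong S$. By simplicity of $S$, either $N \cap \mathrm{Inn}(S) = \mathrm{Inn}(S)$, and we are done, or $N \cap \mathrm{Inn}(S) = 1$.

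In the second case I will derive a contradiction via the commutator trick. Both $N$ and $\mathrm{Inn}(S)$ are normal in $P$, so
\[ [N, \mathrm{Inn}(S)] \le N \cap \mathrm{Inn}(S) = 1. \]
Thus $N$ centralizes $\mathrm{Inn}(S)$. By \cref{TrivCentInn} this centralizer is trivial, so $N = 1$, contradicting nontriviality. This proves that every nontrivial normal subgroup of $P$ contains $\mathrm{Inn}(S)$.

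For the consequence: $\mathrm{Inn}(S)$ is nontrivial because $S$ is nonabelian (hence nontrivial) and $s \mapsto c_s$ is injective. It is normal in $P$, and by the first part it is contained in every nontrivial normal subgroup of $P$. So it is the unique minimal nontrivial normal subgroup. There is no real obstacle here. The only point needing care is that normality of $N \cap \mathrm{Inn}(S)$ in $\mathrm{Inn}(S)$ follows from its normality in $P$, since $\mathrm{Inn}(S) \le P$. That step is where simplicity of $S$ enters.
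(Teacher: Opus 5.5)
Your proposal is correct and follows the paper's proof essentially verbatim: simplicity of $\mathrm{Inn}(S)$ forces $N \cap \mathrm{Inn}(S)$ to be trivial or all of $\mathrm{Inn}(S)$, and in the trivial case $[N,\mathrm{Inn}(S)] \le N \cap \mathrm{Inn}(S) = 1$ contradicts \cref{TrivCentInn}. You also spell out the ``consequently'' part (nontriviality and normality of $\mathrm{Inn}(S)$), which the paper leaves implicit.
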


\begin{proof}
The intersection $N \cap \mathrm{Inn}(S)$ is a normal subgroup of the simple group $\mathrm{Inn}(S)$, so either $N \cap \mathrm{Inn}(S) =1$ or $\mathrm{Inn}(S) \leq N$. If the intersection of these normal subgroups of $P$ were trivial, we would get that $[N, \mathrm{Inn}(S)] \leq N \cap \mathrm{Inn}(S) = 1 $, namely $N$ is contained in the centralizer of $\mathrm{Inn}(S)$, contrary to \cref{TrivCentInn}. We conclude that $N$ contains $\mathrm{Inn}(S)$ as required.
\end{proof}

\begin{lem} \label{lem:rigidity}
Let $P,Q$ be subgroups of $\mathrm{Aut}(S)$ containing $\mathrm{Inn}(S)$, and let $\Psi \colon P\xrightarrow{}Q$ be a group isomorphism. Then there exists $a \in \mathrm{Aut}(S)$ such that $\Psi(p)=a p a^{-1}$ for every $p\in P$.
\end{lem}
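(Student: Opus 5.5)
By \cref{lem:centralizer-and-minimal-normal}, $\mathrm{Inn}(S)$ is the unique minimal nontrivial normal subgroup of $P$, and likewise of $Q$. Being the unique minimal nontrivial normal subgroup is preserved by isomorphisms, so $\Psi(\mathrm{Inn}(S)) = \mathrm{Inn}(S)$. Hence $\Psi$ restricts to an automorphism of $\mathrm{Inn}(S)$. Using the identification $s \mapsto c_s$, this gives a map $a \colon S \to S$ defined by $\Psi(c_s) = c_{a(s)}$. The map $a$ is a bijective homomorphism, so $a \in \mathrm{Aut}(S)$.

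Next I check that $\Psi$ agrees with conjugation by $a$ on $\mathrm{Inn}(S)$. The identity $a c_s a^{-1} = c_{a(s)}$ from the proof of \cref{TrivCentInn} gives $\Psi(c_s) = a c_s a^{-1}$ for all $s \in S$.

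Now take an arbitrary $p \in P$ and set $q = \Psi(p)$, $q' = a p a^{-1}$. Both lie in $\mathrm{Aut}(S)$; a priori $q' \in a P a^{-1}$, which need not be contained in $Q$, but we compare them inside $\mathrm{Aut}(S)$. For each $s$ we have $p c_s p^{-1} = c_{p(s)}$. Applying $\Psi$ and the previous paragraph gives
\[ q\, c_{a(s)}\, q^{-1} = \Psi(c_{p(s)}) = c_{a(p(s))}. \]
On the other hand,
\[ q' c_{a(s)} q'^{-1} = c_{q'(a(s))} = c_{a(p(s))}. \]
Thus $q^{-1}q'$ commutes with every $c_{a(s)}$. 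Since $a$ is surjective, $q^{-1}q'$ centralizes all of $\mathrm{Inn}(S)$ in $\mathrm{Aut}(S)$. By \cref{TrivCentInn} this forces $q = q'$, that is, $\Psi(p) = a p a^{-1}$.

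The only subtle step is this last one: one must note that the centralizer computation takes place in the ambient group $\mathrm{Aut}(S)$, not in $P$ or $Q$. That is exactly what \cref{TrivCentInn} provides. The rest is bookkeeping with the identity $a c_s a^{-1} = c_{a(s)}$.
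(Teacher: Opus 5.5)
Your proof is correct and follows essentially the same route as the paper. Both use \cref{lem:centralizer-and-minimal-normal} to get $\Psi(\mathrm{Inn}(S)) = \mathrm{Inn}(S)$, extract $a \in \mathrm{Aut}(S)$ from $\Psi|_{\mathrm{Inn}(S)}$, and then apply the trivial-centralizer statement \cref{TrivCentInn} in $\mathrm{Aut}(S)$, after checking that $\Psi(p)$ and $apa^{-1}$ induce the same conjugation on $\mathrm{Inn}(S)$. The only cosmetic difference is that you check this on the elements $c_{a(s)}$ via $c_{p(s)} = p c_s p^{-1}$, whereas the paper checks it on an arbitrary $i \in \mathrm{Inn}(S)$ via $\Psi(a^{-1} i a) = i$.
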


\begin{proof}
It follows from \cref{lem:centralizer-and-minimal-normal} applied to $P$ and to $Q$ that $\Psi(I)=I$, so
\[ \Psi|_{\mathrm{Inn}(S)} \colon \mathrm{Inn}(S) \xrightarrow{} \mathrm{Inn}(S) \]
is an automorphism, hence it gives rise to an automorphism of $S$, which we denote by $a \in \mathrm{Aut}(S)$. That is, for every $s \in S$ we have $\Psi(c_s)=c_{a(s)}=a c_s a^{-1}$ so for every $i \in \mathrm{Inn}(S)$ we get that
\[ \Psi(i)=aia^{-1}. \]

We want to show that $\Psi(p) = apa^{-1}$, or equivalently $\Psi(p)^{-1} a p a^{-1} = \mathrm{id}_S$, for each $p \in P$. By \cref{TrivCentInn} it suffices to show that $\Psi(p)^{-1} a p a^{-1}$ commutes with every $i \in \mathrm{Inn}(S)$, or equivalently
\[ (a p a^{-1}) i (a p a^{-1})^{-1} = \Psi(p) i \Psi(p)^{-1}. \]
Indeed we have $a p a^{-1} i a p^{-1}a^{-1} = \Psi(p a^{-1} i a p^{-1}) = \Psi(p)\Psi(a^{-1} i a)\Psi(p)^{-1} = \Psi(p) i \Psi(p)^{-1}$.
\end{proof}

\begin{proof}[Proof of Theorem~\ref{thm:main:infinite}]
By Theorem~\ref{thm:DGG}, there exists a nonabelian simple group $S$ with $\mathrm{Out}(S) \cong G$. From now on we identify these two groups. Put $\widetilde G = \mathrm{Aut}(S)$, and let $\varphi \colon \widetilde G \to G$ be the quotient map of $\mathrm{Aut}(S)$ by $\mathrm{Inn}(S)$ onto $\mathrm{Out}(S)$. Toward a contradiction suppose that $\varphi^{-1}(G_1)$ and $\varphi^{-1}(G_2)$ are isomorphic. It follows from \cref{lem:rigidity} that they are conjugate in $\widetilde G$ because both contain $\mathrm{Inn}(S) = \ker \varphi$. We conclude that $G_1 = \varphi(\varphi^{-1}(G_1))$ and $G_2 = \varphi(\varphi^{-1}(G_2))$ are conjugate in $G = \varphi(\varphi^{-1}(G))$, contrary to our assumption.
\end{proof}

The strengthening of \cref{thm:main:infinite} where we demand, similarly to \cref{MainRes}, $\ker \varphi$ to be finite fails even if we allow $\varphi$ and $\widetilde G$ to depend on $G_1$ and on $G_2$. To see this, take $G$ to be the additive group $\mathbb Q$, and consider its nonconjugate subgroups $G_1=\mathbb{Z}$, $G_2=2\mathbb{Z}$. It's enough to check that for every infinite cyclic subgroup $C$ of $G$ we have $\varphi^{-1}(C) \cong \ker \varphi \times C$. Picking a preimage of a generator of $C$ under $\varphi$ we see that $\varphi^{-1}(C) \cong \ker \varphi \rtimes C$, with the group homomorphism $C \to \mathrm{Out}(\ker \varphi)$ being the restriction of the group homomorphism $G \to \mathrm{Out}(\ker \varphi)$. Since $G$ is divisible and $\mathrm{Out}(\ker \varphi)$ is finite (because $\ker \varphi$ is assumed finite), this homomorphism is trivial, so the semidirect product is isomorphic to a direct product. 

\section{Proof of Theorem \ref{C:1}}\label{Proof_C1}

The group $\Gamma$ is Kleinian, and it can be presented as:
$$\Gamma = \langle x,y,z~|~x^2, y^2, z^3, (yz)^2, (zx)^5, (xy)^3 \rangle.$$

\noindent  The Magma code shown below exhibits a surjective homomorphism from $\Gamma$ (denoted by {\tt{g}} in the code) onto $\mathrm{PSL}(2,29)$ (denoted by {\tt{h}}), lists the maximal subgroups of $\mathrm{PSL}(2,29)$ and then computes a presentation for the pre-images of the pair of $A_5$ subgroups that arise (as described above) as subgroups of index $203$ (denoted by {\tt{RR}} and {\tt{SS}}). Magma then shows that the pre-images have different numbers of index $5$ subgroups: in one case the subgroup has $1$ and the other subgroup has $5$. It follows that the pre-images have non-isomorphic profinite completions, so the theorem is a consequence of Proposition \ref{CorProfComp}.

\medskip

\begin{verbatim}
> g<x,y,z>:=Group<x,y,z|x^2,y^2,z^3,(y*z)^2,(z*x)^5,(x*y)^3>;
> print AbelianQuotientInvariants(g);
[]
> h:=PSL(2,29);
imgs := [h!(1, 9)(2, 19)(3, 16)(4, 26)(5, 11)(6, 20)(7, 8)(10, 13)(12, 21)
(14, 23)(15, 29)(22, 25)(24, 30)(27, 28),h!(1, 16)(2, 30)(3, 22)(4, 24)
(5, 18)(6, 13)(7, 23)(8, 28)(9, 25)(10, 29)(12, 17)(14, 27)
(15, 20)(19, 26),h!(1, 2, 26)(3, 20, 18)(4, 6, 14)(5, 15, 22)(7, 9, 29)(8, 21, 28)
(10, 25, 23)(11, 12, 17)(13, 24, 27)
(16, 19, 30)];
> e := hom< g->h | imgs >; e(g) eq h;
true
> M:=MaximalSubgroups(h);
> print M;
Conjugacy classes of subgroups
------------------------------

[1]     Order 28           Length 435
         Permutation group acting on a set of cardinality 30
         Order = 28 = 2^2 * 7
             (1, 23)(2, 13)(3, 7)(4, 29)(5, 25)(8, 11)(9, 26)(10, 21)(12, 15)
             (14, 27)(16, 20)(18, 28)(19, 24)(22, 30)
             (1, 26, 30, 20, 5, 27, 15)(2, 18, 6, 28, 13, 3, 7)
             (8, 19, 21, 10, 24, 11, 17)(9, 23, 12, 14, 25, 16, 22)
             (1, 9)(2, 24)(3, 21)(5, 25)(6, 17)(7, 10)(8, 28)
             (11, 18)(12, 30)(13, 19)(14, 20)(15, 22)(16, 27)(23, 26)
[2]     Order 30           Length 406
         Permutation group acting on a set of cardinality 30
         Order = 30 = 2 * 3 * 5
             (1, 3)(2, 26)(4, 13)(5, 9)(6, 29)(7, 12)(8, 28)
             (10, 19)(11, 16)(14, 18)(15, 25)(17, 24)(20, 22)(21, 27)
             (1, 16, 9, 19, 27)(2, 20, 12, 18, 13)(3, 21, 10, 5, 11)
             (4, 14, 7, 22, 26)(6, 30, 29, 25, 15)(8, 28, 24, 23, 17)
             (1, 24, 5)(2, 15, 4)(3, 9, 17)(6, 14, 20)(7, 12, 30)
             (8, 21, 19)(10, 27, 28)(11, 16, 23)(13, 25, 26)(18, 29, 22)
[3]     Order 406          Length 30
         Permutation group acting on a set of cardinality 30
         Order = 406 = 2 * 7 * 29
             (1, 4, 24, 7, 29, 20, 16)(2, 22, 25, 10, 6, 27, 19)
             (3, 17, 5, 21, 9, 23, 28)(8, 12, 14, 18, 15, 13, 11)
             (1, 19)(2, 4)(3, 11)(5, 12)(6, 20)(7, 25)(8, 17)
             (9, 18)(10, 29)(13, 28)(14, 21)(15, 23)(16, 27)(22, 24)
             (1, 22, 8, 16, 27, 17, 24, 19, 4, 23, 29, 18, 3, 28, 5, 
25, 6, 14, 30, 21, 20, 7, 12, 13, 11, 9, 10, 15, 2)
[4]     Order 60           Length 203
         Permutation group acting on a set of cardinality 30
         Order = 60 = 2^2 * 3 * 5
             (1, 7)(2, 6)(3, 5)(8, 30)(9, 29)(10, 28)(11, 27)
             (12, 26)(13, 25)(14, 24)(15, 23)(16, 22)(17, 21)(18, 20)
             (1, 17, 6)(2, 19, 10)(3, 11, 7)(4, 25, 12)(5, 9, 22)
             (8, 27, 13)(14, 20, 21)(15, 26, 16)(18, 29, 28)(23, 24, 30)
[5]     Order 60           Length 203
         Permutation group acting on a set of cardinality 30
         Order = 60 = 2^2 * 3 * 5
             (1, 29)(3, 26)(4, 24)(5, 11)(6, 30)(7, 17)(8, 15)
             (10, 12)(13, 22)(14, 18)(16, 21)(19, 28)(20, 25)(23, 27)
             (1, 17, 6)(2, 19, 10)(3, 11, 7)(4, 25, 12)(5, 9, 22)
             (8, 27, 13)(14, 20, 21)(15, 26, 16)(18, 29, 28)(23, 24, 30)
> a := M[4]`subgroup;
> A := a @@ e;
> R := ReduceGenerators (A);
> print Index(g,R);
203
> RR:=Rewrite(g,R);
> print AbelianQuotientInvariants (RR);
[]
> a := M[5]`subgroup;
> A := a @@ e;                         
> S := ReduceGenerators (A);       
>      print Index(g,S);
203
> SS:=Rewrite(g,S);                    
> print AbelianQuotientInvariants (SS);
[]
> l:=LowIndexSubgroups(RR,<5,5>);
> print #l;
1
> k:=LowIndexSubgroups(SS,<5,5>);
> print #k;
5
\end{verbatim}

\section{Acknowledgments} 

Mark Shusterman is The Dr. A. Edward Friedmann Career Development Chair in Mathematics.

Karshon and Shusterman's research is co-funded by the European Union (ERC, Function Fields, 101161909). Views and opinions expressed are however those of the authors only and do not necessarily reflect those of the European Union or the European Research Council. Neither the European Union nor the granting authority can be held responsible for them.

Lubotzky was supported by the European Research Council (ERC) under
the European Union's Horizon 2020 (N. 882751).

Lubotzky and Reid would like to thank the Isaac Newton Institute for Mathematical Sciences, Cambridge, for support and hospitality during the program Operators, Graphs and Groups,  where initial discussions on the contents of this paper started.  This work was supported by EPSRC grant no EP/R014604/1.

McReynolds would like to thank Ryan Spitler for conversations on Proposition \ref{SmallGroupProp}, and Reid thanks Eamonn O'Brien for Magma counseling, and Giles Gardam for conversations on the topic of this note.

We would like to thank an anonymous referee for several enlightening comments.


\end{document}